\newcommand{\mbbZ}{\mathbb{Z}}
\newcommand{\mbbC}{\mathbb{C}}
\newcommand{\mbbP}{\mathbb{P}}
\renewcommand{\dim}{\operatorname{dim}}
\newcommand{\lspan}{\operatorname{span}}
\newcommand{\Ker}{\operatorname{Ker}}
\newcommand{\Coker}{\operatorname{Coker}}
\renewcommand{\Im}{\operatorname{Im}}
\newcommand{\Hom}{\operatorname{Hom}}
\newcommand{\Diff}{\operatorname{Diff}}
\renewcommand{\exp}{\operatorname{exp}}
\newcommand{\End}{\operatorname{End}}
\newcommand{\GL}{\operatorname{GL}}
\newcommand{\Sp}{\operatorname{Sp}}
\newcommand{\iGr}{\operatorname{iGr}}
\newcommand{\sign}{\operatorname{sign}}
\newcommand{\mcaO}{\mathcal{O}}
\newcommand{\mcaF}{\mathcal{F}}
\newcommand{\mfrb}{\mathfrak{b}}
\newcommand{\mfrg}{\mathfrak{g}}
\newcommand{\mfrh}{\mathfrak{h}}
\newcommand{\mfrl}{\mathfrak{l}}
\newcommand{\mfrp}{\mathfrak{p}}
\newcommand{\mfrr}{\mathfrak{r}}
\newcommand{\mfru}{\mathfrak{u}}
\newcommand{\mfrgl}{\mathfrak{gl}}
\newcommand{\mfrsp}{\mathfrak{sp}}
\newcommand{\pbar}{\, |\,}
\newtheorem{theorem}{Theorem}
\newtheorem*{theorem*}{Theorem}
\newtheorem{proposition}[theorem]{Proposition}
\newtheorem*{proposition*}{Proposition}
\newtheorem{lemma}[theorem]{Lemma}
\newtheorem*{lemma*}{Lemma}
\newtheorem{corollary}[theorem]{Corollary}
\newtheorem*{corollary*}{Corollary}
\theoremstyle{remark}
\theoremstyle{remark}\newtheorem*{definition*}{Definition}
\theoremstyle{remark}
\theoremstyle{remark}\newtheorem*{example*}{Example}
\theoremstyle{remark}\newtheorem{remark}[theorem]{Remark}
\theoremstyle{remark}\newtheorem*{remark*}{Remark}
\theoremstyle{remark}
\theoremstyle{remark}\newtheorem*{conjecture*}{Conjecture}
\newlength{\defaultboxsize}
\newenvironment{dynkin}
{\begin{tikzpicture}[baseline={(0,{-0.7*height("$\alpha_{1_1}$")*1pt})}, decoration={markings,mark=at position 0.6 with {\arrow[ultra thick]{<}}}]}
{\end{tikzpicture}}
\newlength{\dynkinstep}
\newlength{\dynkindotradius}
\newlength{\dynkincrosssize}
\newcommand{\dynkinline}[4]{
\draw(\dynkinstep*#1,\dynkinstep*#2) -- (\dynkinstep*#3,\dynkinstep*#4);}
\newcommand{\dynkindoubleline}[4]{
\draw[double,double distance=.7*\dynkindotradius,postaction={decorate}] (\dynkinstep*#1,\dynkinstep*#2) -- (\dynkinstep*#3,\dynkinstep*#4);}
\newcommand{\dynkindot}[2]{
\draw (\dynkinstep*#1,\dynkinstep*#2) [fill=white] circle  (\dynkindotradius);}
\newcommand{\dynkincross}[2]{
\draw[very thick,white] (#1*\dynkinstep-\dynkincrosssize,#2*\dynkinstep-\dynkincrosssize) -- (#1*\dynkinstep+\dynkincrosssize,#2*\dynkinstep+\dynkincrosssize);
\draw[very thick,white] (#1*\dynkinstep-\dynkincrosssize,#2*\dynkinstep+\dynkincrosssize) -- (#1*\dynkinstep+\dynkincrosssize,#2*\dynkinstep-\dynkincrosssize);
\draw (#1*\dynkinstep-\dynkincrosssize,#2*\dynkinstep-\dynkincrosssize) -- (#1*\dynkinstep+\dynkincrosssize,#2*\dynkinstep+\dynkincrosssize);
\draw (#1*\dynkinstep-\dynkincrosssize,#2*\dynkinstep+\dynkincrosssize) -- (#1*\dynkinstep+\dynkincrosssize,#2*\dynkinstep-\dynkincrosssize);}
\newcommand{\dynkindots}[4]{
\draw[dotted] (\dynkinstep*#1,\dynkinstep*#2) -- (\dynkinstep*#3,\dynkinstep*#4);}
\newcommand{\dynkinlabel}[4]{
\node [#3] at (\dynkinstep*#1,\dynkinstep*#2) {{\scriptsize #4}};}
\newcommand\halfbox[1]{ \tikz[baseline=(n.base)]{
\node(n)[inner sep=1pt]{$#1$};
\draw[line cap=round](n.north west)--(n.south west)--(n.south east);}\smallskip}
\newcommand\spectralsequence[1]{\setlength{\abovedisplayskip}{1em}
\setlength{\belowdisplayskip}{1em}
\halfbox{\xymatrix@R=.3em@C=.9em{#1}}}
\renewcommand*\env@matrix[1][*\c@MaxMatrixCols c]{%
  \hskip -\arraycolsep
  \let\@ifnextchar\new@ifnextchar
  \array{#1}}
\setlist[enumerate,1]{label=(\alph*), ref=(\alph*)}
\setlist[enumerate,2]{label=\roman*., ref=\roman*.}
\setlist[enumerate,3]{label=\Alph*., ref=\Alph*.}
\setlist[enumerate,4]{label=\arabic*., ref=\arabic*.}
\begin{document}

\selectlanguage{english}

\title{Singular BGG complexes over isotropic 2-Grassmannian}

\author{Denis Husadžić}

\address{Faculty of Science, University of Zagreb, Bijenička cesta 30, 10 000 Zagreb, Croatia.}

\email{dhusadzi@math.hr}

\author{Rafael Mrđen}

\address{Faculty of Civil Engineering, University of Zagreb, Fra Andrije Kačića-Miošića 26, 10 000 Zagreb, Croatia.}

\email{rafaelm@grad.hr}

\thanks{The authors acknowledge support from the Croatian Science Foundation grant no. 4176, and the QuantiXLie Center of Excellence grant no. KK.01.1.1.01.0004 funded by the European Regional Development Fund.}

\subjclass[2010]{Primary: 58J10; Secondary: 53C28, 53A55.}

\keywords{Bernstein-Gelfand-Gelfand (BGG) complexes; Singular infinitesimal character; Invariant differential operators; Isotropic $2$-Grassmannian; Penrose transform}


\begin{abstract}
We construct exact sequences of invariant differential operators acting on sections of certain homogeneous vector bundles in singular infinitesimal character, over the isotropic $2$-Grassmannian. This space is equal to $G/P$, where $G$ is $\Sp(2n,\mbbC)$, and $P$ its standard parabolic subgroup having the Levi factor $\GL(2,\mbbC) \times \Sp(2n-4,\mbbC)$. The constructed sequences are analogues of the Bernstein-Gelfand-Gelfand resolutions. We do this by considering the Penrose transform over an appropriate double fibration. The result differs from the Hermitian situation.
\end{abstract}

\maketitle


\section{Introduction and preliminaries}

The BGG (Bernstein-Gelfand-Gelfand) complexes first appeared as certain resolutions of irreducible finite dimensional $\mfrg$-modules by a direct sums of generalized Verma modules of fixed type $(\mfrg,\mfrp)$, where $\mfrp \subseteq \mfrg$ is a parabolic subalgebra of a complex semisimple Lie algebra (\cite{bernstein1975differential}, \cite{lepowsky1977generalization}). The highest weights of the generalized Verma modules appearing in the resolution are exactly the Levi-dominant elements in the affine Weyl group orbit of the highest weight of the resolved module. 

In the dual geometric picture (Remark \ref{remark:duality}), BGG complexes were studied by Čap, Slovák and Souček (\cite{cap2001bernstein}). They constructed BGG complexes in a very general theory of ``curved'' parabolic geometries. In the flat model for this theory (homogeneous space $G/P$, where $P \subseteq G$ is a parabolic subgroup of a complex semisimple Lie group), their construction yields a locally exact resolution of the constant sheaf over $G/P$ defined by a finite dimensional $G$-module, by direct sums of homogeneous vector bundles and invariant differential operators; essentially dual to the one in \cite{lepowsky1977generalization}. The BGG complexes show up in many different areas of mathematics (\cite{eastwood1999variations}). For example, the BGG complex for the trivial $G$-module appears as a subcomplex of the holomorphic de Rham complex, and coincides with it precisely when $G/P$ is a Hermitian symmetric space.

One important requirement in the constructions mentioned above is that the modules are of regular infinitesimal character, so the resolved module is a finite-dimensional one. There are no general constructions of analogous resolutions in a singular infinitesimal character. A serious obstacle there is a lack of the so called standard operators. The non-standard operators have not been classified yet, not even in  regular infinitesimal character, except in some special cases (e.g. \cite{matumoto2006homomorphisms}).

The question of existence of such resolutions in singular infinitesimal character, at least in principle, is settled for all Hermitian pairs (also known as $|1|$-graded, i.e. parabolics whose nilpotent radical is abelian), by the Enright-Shelton theory (\cite{enright1987categories}). However, from this theory it is not clear how to construct the operators appearing in the resolutions. It turned out that the Penrose transform (as in \cite{baston2016penrose}) is a particularly useful tool for the construction of such operators, as observed by Baston in \cite{baston1992quaternionic}, by working on (singular) quaternionic complexes. In \cite{pandzic2016bgg}, Pandžić and Souček constructed singular BGG complexes in type $A$, for all maximal parabolics, i.e., all complex Grassmannians. In \cite{mrden2017singular}, the author obtained analogous resolutions over the Langrangian Grassmannian, in a semi-regular infinitesimal character. All these cases above are Hermitian. It is visible from the construction that either the whole singular orbit makes a BGG complex, or (if the long simple root is singular) the orbit decomposes into two subsets, giving two disjoint BGG complexes, as predicted by the Enright-Shelton theory.

It would be of interest to provide singular BGG complexes for all maximal parabolics in type $C$. However, all but one are $|2|$-graded, hence the situation is much more complicated than in the Hermitian cases. In particular, the Enright-Shelton theory is not applicable.

This paper deals with the isotropic $2$-Grassmannian, with respect to a skew-symmetric form. This is a (quotient by a) maximal parabolic subgroup in type $C$, denoted by $\begin{dynkin}
\dynkinline{1}{0}{2}{0}
\dynkinline{2}{0}{3}{0}
\dynkindots{3}{0}{4}{0}
\dynkindoubleline{4}{0}{5}{0}
\dynkindot{1}{0}
\dynkincross{2}{0}
\dynkindot{3}{0}
\dynkindot{4}{0}
\dynkindot{5}{0}
\end{dynkin}$. Also, it is the flat model for quaternionic contact geometries (\cite[4.3.3.]{cap2009parabolic}). The main result, Theorem \ref{theorem:main}, gives the BGG complexes in many singular infinitesimal characters, which cover almost all types of singularities for this space. The exactness over the big affine cell is proved. The construction of the non-standard differential operators appearing in the resolutions is also described. It turns out that our singular orbits consist of two BGG complexes that have an object in common, in contrast to the Hermitian situations.

The question of existence of BGG complexes is closely related to classification of Kostant modules (\cite{boe2009kostant}, \cite{enright2014diagrams}), which is solved only in the Hermitian cases. Also, BGG complexes are closely related to the Cousin complexes, as in \cite{milicic_cousin}. For other similar results in a higher grading, see \cite{krump2006singular}, \cite{salac2018k-dirac}, \cite{salac2018resolution}.

\subsection{Preliminaries}

Let $G$ be a semisimple complex Lie group, connected and simply connected, $\mfrg$ its Lie algebra, $\mfrh$ its fixed Cartan subalgebra, and $\Delta^+(\mfrg,\mfrh)$ a fixed set of positive roots. The half sum of all the positive roots will be denoted by $\rho$. For an element $w \in W_\mfrg$ of the Weyl group, denote by $l(w)$ the length of $w$, i.e., the minimal number of simple reflections required to obtain $w$. For $w, w' \in W_\mfrg$ we write $w \stackrel{\alpha}{\longrightarrow} w'$ if $l(w')=l(w)+1$ and $w'=\sigma_\alpha \circ w$ (where $\sigma_\alpha$ is the reflection with respect to $\alpha$), for some $\alpha \in \Delta^+(\mfrg,\mfrh)$, not necessarily simple. We often write only $w \longrightarrow w'$. In this way, $W_\mfrg$ becomes a directed graph. Besides the standard action of $W_\mfrg$ on $\mfrh^\ast$, we also need the affine action: $w \cdot \lambda = w(\lambda+\rho)-\rho$. In fact, we will almost always write weights in the $\rho$-shifted coordinates, and apply the standard action of $W_\mfrg$. 

Fix a standard parabolic subalgebra $\mfrp = \mfrl \oplus \mfru$ (the Levi decomposition) of $\mfrg$. The Hasse diagram of $\mfrp$, denoted by $W^\mfrp$, is the full subgraph of $W_\mfrg$ consisting of all elements in $W_\mfrg$ that map $\mfrg$-dominant weights to $\mfrl$-dominant ones (or equivalently, that map $\rho$ to a $\mfrl$-dominant weight). For a $\mfrg$-integral and $\mfrl$-dominant weight $\lambda$, we write $F_\mfrp(\lambda)$ for the finite-dimensional, irreducible representation of $\mfrl$ with highest weight $\lambda$, and with $\mfru$ acting by $0$. We write $E_\mfrp(\lambda)$ for its dual. The same notation is used for the group representations.

Given a finite-dimensional holomorphic representation $\pi \colon P \to \End(V)$, we can form the homogeneous holomorphic vector bundle $G \times_P V \to G/P$. Its holomorphic sections correspond to $V$-valued holomorphic functions on open subsets of $G$ that are $P$-equivariant, and they make up a homogeneous sheaf. For $V=E_\mfrp(\lambda)$, this sheaf is denoted by $\mcaO_\mfrp(\lambda)$. By an invariant differential operator we will mean a $\mbbC$-linear differential operator $\mcaO_\mfrp(\lambda) \to \mcaO_\mfrp(\mu)$, invariant with respect to the left translation of sections.

Consider the Borel subgroup $B \subseteq P$. If there exists a non-zero invariant differential operator $\mcaO_\mfrb(\lambda) \to \mcaO_\mfrb(\mu)$, then it is unique up to a scalar. The direct image of such a map via $G/B \to G/P$ is again an invariant differential operator, called the standard operator $\mcaO_\mfrp(\lambda) \to \mcaO_\mfrp(\mu)$. It may be zero, and there may exist invariant differential operators which are non-standard. Standard operators are in principle completely known, but non-standard ones have not yet been classified. In this paper we will construct some non-standard operators. 

\begin{remark}
\label{remark:duality}
There is a well known contravariant correspondence between the sheaves $\mcaO_\mfrp(\lambda)$ and the generalized Verma modules $M_\mfrp(\lambda) = U(\mfrg) \otimes_{U(\mfrp)} F_\mfrp(\lambda)$:
\begin{equation*}
\label{equation:inv_diff_op=verma_hom}
\Diff_G(\mcaO_\mfrp(\lambda),\mcaO_\mfrp(\mu)) \cong \Hom_\mfrg(M_\mfrp(\mu),M_\mfrp(\lambda)),
\end{equation*}
where the left-hand side denotes invariant differential operators. See \cite[A]{cap2001bernstein}. The correspondence enables us to study invariant differential operators via algebraic and representation-theoretic machinery.
\end{remark}

\begin{theorem*}[Bernstein-Gelfand-Gelfand-Lepowsky, Čap-Slov\'{a}k-Souček]
\label{theorem:regular_BGG_geometric}
For any $\mfrg$-integral and $\mfrg$-dominant weight $\lambda$, there is a locally exact sequence on $G/P$ resolving the constant sheaf defined by $E_\mfrg(\lambda)$, called the (regular) BGG resolution:
\begin{equation*}
\label{equation:regular_BGG}
0 \to E_\mfrg(\lambda) \to \Delta^\bullet(\lambda), \quad \text{where} \quad \Delta^k(\lambda)= \bigoplus_{w \in W^{\mfrp}, \ l(w)=k} \mcaO_\mfrp(w \cdot \lambda).
\end{equation*}
The morphisms are the direct sums of the standard operators $\mcaO_\mfrp(w \cdot \lambda) \to \mcaO_\mfrp(w' \cdot \lambda)$ for $w \rightarrow w'$ in $W^\mfrp$, all of which are non-zero. 
\end{theorem*}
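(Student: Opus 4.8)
The plan is to separate the statement into a purely algebraic construction of the complex together with the nonvanishing of its arrows, and a geometric argument for local exactness. For the algebraic part I would invoke Lepowsky's generalized Bernstein--Gelfand--Gelfand resolution \cite{lepowsky1977generalization}. For $\lambda$ dominant and regular it asserts the exactness of
\[
0 \to \bigoplus_{l(w)=N} M_\mfrp(w\cdot\lambda)\xrightarrow{\ \partial\ }\cdots\xrightarrow{\ \partial\ }\bigoplus_{l(w)=1}M_\mfrp(w\cdot\lambda)\xrightarrow{\ \partial\ }M_\mfrp(\lambda)\longrightarrow F_\mfrg(\lambda)\to 0,
\]
where $N=\dim_\mbbC G/P$, all sums run over $w\in W^\mfrp$, and $F_\mfrg(\lambda)$ is the finite-dimensional irreducible $\mfrg$-module of highest weight $\lambda$ (dual to $E_\mfrg(\lambda)$). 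Each differential is assembled from the up-to-scalar unique nonzero homomorphisms $M_\mfrp(w'\cdot\lambda)\to M_\mfrp(w\cdot\lambda)$ attached to the edges $w\to w'$ of the Hasse diagram $W^\mfrp$; their existence and the relation $\partial^2=0$ are part of Lepowsky's theorem, while exactness is deduced from the classical Verma-module BGG resolution \cite{bernstein1975differential} through the functor passing from category $\mcaO$ to its parabolic subcategory, regularity of $\lambda$ ensuring that $w\mapsto w\cdot\lambda$ is injective.

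Applying the contravariant correspondence of Remark \ref{remark:duality} termwise reverses the arrows and turns each homomorphism $M_\mfrp(w'\cdot\lambda)\to M_\mfrp(w\cdot\lambda)$ into a nonzero invariant differential operator $\mcaO_\mfrp(w\cdot\lambda)\to\mcaO_\mfrp(w'\cdot\lambda)$, which is by definition a standard operator. The resolution thereby becomes a complex of sheaves $0\to E_\mfrg(\lambda)\to\Delta^\bullet(\lambda)$ with $\Delta^k(\lambda)=\bigoplus_{l(w)=k}\mcaO_\mfrp(w\cdot\lambda)$ exactly as stated, the augmentation being the inclusion of flat sections. Nonvanishing of every arrow is inherited directly from nonvanishing of the corresponding generalized Verma homomorphism. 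This already establishes all assertions of the theorem except the local exactness of the sheaf complex, which does not follow formally from exactness on the module side.

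For local exactness I would follow Čap--Slovák--Souček \cite{cap2001bernstein}. Since $E_\mfrg(\lambda)$ is the restriction of a $G$-module, the bundle $G\times_P E_\mfrg(\lambda)$ is canonically trivial and carries the flat connection whose horizontal sections are the constant ones; hence the holomorphic twisted de Rham complex $\Omega^\bullet\otimes E_\mfrg(\lambda)$ is a locally exact resolution of the constant sheaf $E_\mfrg(\lambda)$ by the holomorphic Poincaré lemma. Writing $\Omega^k\cong G\times_P\Lambda^k(\mfrg/\mfrp)^\ast$, the fibrewise part of the differential is the Chevalley--Eilenberg differential computing $\mfru$-cohomology with coefficients in $E_\mfrg(\lambda)$, and Kostant's theorem identifies its degree-$k$ cohomology with the $\mfrl$-module $\bigoplus_{l(w)=k}E_\mfrp(w\cdot\lambda)$, i.e. with the fibre of $\Delta^k(\lambda)$. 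Using the Kostant Laplacian to split each term into harmonic, exact, and coexact parts, one constructs a homotopy exhibiting the harmonic subcomplex $\Delta^\bullet(\lambda)$ as a deformation retract of $\Omega^\bullet\otimes E_\mfrg(\lambda)$; the induced differentials are invariant, hence by uniqueness coincide with the standard operators produced above, and $\Delta^\bullet(\lambda)$ inherits the local cohomology of the de Rham complex, namely $E_\mfrg(\lambda)$ in degree $0$ and zero in positive degrees. The main obstacle is to carry out this retraction $\mfrp$-equivariantly, so that the differentials induced on the harmonic subbundles are genuine invariant differential operators of the prescribed weights rather than mere sheaf maps; this is exactly where the regularity and dominance of $\lambda$ enter, guaranteeing through Kostant's theorem that the harmonic pieces are indexed precisely by $W^\mfrp$ with no collisions or higher multiplicities.
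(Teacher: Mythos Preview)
The paper does not give a proof of this theorem. It is stated as a classical result, attributed in the theorem heading to Bernstein--Gelfand--Gelfand--Lepowsky and \v{C}ap--Slov\'ak--Sou\v{c}ek, with the relevant references \cite{bernstein1975differential}, \cite{lepowsky1977generalization}, \cite{cap2001bernstein} given in the introduction; the text simply moves on after the statement. So there is nothing in the paper to compare your argument against.

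That said, your outline is a faithful summary of how the cited sources establish the result: Lepowsky's resolution supplies the complex of generalized Verma modules and the nonvanishing of the edge maps, the duality of Remark~\ref{remark:duality} translates this into the sheaf complex $\Delta^\bullet(\lambda)$ with its standard operators, and the \v{C}ap--Slov\'ak--Sou\v{c}ek machinery (twisted de~Rham, Kostant's $\mfru$-cohomology, retraction via the Kostant Laplacian) yields local exactness. Your identification of the delicate point---that the retraction onto harmonics must be carried out $\mfrp$-equivariantly so that the induced maps are genuine invariant differential operators---is accurate and is indeed where the work in \cite{cap2001bernstein} lies. One small remark: you write ``for $\lambda$ dominant and regular'', but the hypothesis is only that $\lambda$ is integral and dominant; regularity of the infinitesimal character is automatic because $\lambda+\rho$ is then strictly dominant.
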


The modules in the theorem above are all of infinitesimal character $\lambda+\rho$, hence regular. We are interested in finding the analogues of the above theorem in singular infinitesimal characters.

\subsection{Type C}
We specialize to $G=\Sp(2n,\mbbC)=\begin{dynkin}
\setlength{\dynkinstep}{1cm}
\dynkinline{1}{0}{2}{0}
\dynkindots{2}{0}{3}{0}
\dynkinline{3}{0}{4}{0}
\dynkindoubleline{4}{0}{5}{0}
\dynkindot{1}{0}
\dynkinlabel{1}{0}{below}{$\alpha_1$}
\dynkindot{2}{0}
\dynkinlabel{2}{0}{below}{$\alpha_2$}
\dynkindot{3}{0}
\dynkinlabel{3}{0}{below}{$\alpha_{n-2}$}
\dynkindot{4}{0}
\dynkinlabel{4}{0}{below}{$\alpha_{n-1}$}
\dynkindot{5}{0}
\dynkinlabel{5}{0}{below}{$\alpha_n$}
\end{dynkin}$, the complex symplectic group: linear operators on $\mbbC^{2n}$ preserving the bilinear skew-symmetric form $J=\begin{pmatrix}
0 & I_n \\ -I_n & 0
\end{pmatrix}$. Choose the Cartan subalgebra of the Lie algebra of $G$ consisting of diagonal matrices $\mfrh \subseteq \mfrg=\mfrsp(2n,\mbbC)$, and the positive roots:
\[ \Delta^+(\mfrg,\mfrh) = \{  a_{ij} := \epsilon_i - \epsilon_j , \quad b_i := 2 \epsilon_i , \quad c_{ij} :=  \epsilon_i+ \epsilon_j \ \colon \ 1\leq i < j \leq n \},
\]
where $\epsilon_i$ denotes the projection to the $i$-th coordinate. The set $\Pi$ of the simple roots consists of short roots $\alpha_i=a_{i,i+1}$ for $i=1,\ldots,n-1$, and the long root $\alpha_n=b_n$. A weight $\lambda = (\lambda_1, \lambda_2, \ldots, \lambda_n ) \in \mfrh^\ast$ is integral if all $\lambda_i \in \mbbZ$, and dominant if $\lambda_1 \geq \lambda_2 \geq \ldots \lambda_n \geq 0$. In this paper we only work with integral weights. The half sum of all positive roots is $\rho = (n,n-1,\ldots,1)$. The simple reflection $\sigma_{a_{ij}}$ acts on $\mfrh^\ast$ by transposition of the $i$-th and the $j$-th coordinate, $\sigma_{b_i}$ changes sign of the $i$-th coordinate, and $\sigma_{c_{ij}}$ acts as transposition and sign changes on the $i$-th and the $j$-th coordinates. So, the Weyl group acts by permutations and sign changes of the coordinates.  A weight is regular if and only if it does not have two coordinates with the same absolute value, and all the coordinates are non-zero; otherwise it is singular. A singular weight is called semi-regular, if it is orthogonal to only one simple root.

Parabolic subgroups/subalgebras will be given by crossing some nodes on the Dynkin diagram for $\mfrg$. The root vectors corresponding to the uncrossed nodes and their negatives, and the Cartan subalgebra, generate the Levi factor $\mfrl$. The root vectors of the positive roots not contained in $\Delta(\mfrl,\mfrh)$ generate the nilpotent radical $\mfru$. In this way we get a standard parabolic subalgebra $\mfrp = \mfrl \oplus \mfru$. The corresponding parabolic subgroup $P$ is defined as the normalizer of $\mfrp$ in $G$. There is also the opposite nilpotent radical $\mfru^-$, consisting of root subspaces opposite to those in $\mfru$. Then, we have $\mfrg= \mfru^- \oplus \mfrl \oplus \mfru$. Weights for the Levi factor can be written as $n$-tuples again, but for every crossed node $\alpha_i$ in the Dynkin diagram for the parabolic subalgebra, we will put a bar after the $i$-th coordinate of the weight. This is a good mnemonic for keeping track of the dominance condition for the Levi factor: there is no condition on the order of an adjacent pair of coordinates if there is a bar between these coordinates. But the coordinates have to descend in each group, and also be strictly positive in the last group. If there is a bar after the last coordinate, then there is no positivity condition.

Suppose $P \subseteq G=\Sp(2n,\mbbC)$ is a standard parabolic subgroup, say it has crosses on the positions $k_1, \ldots, k_s$ in its Dynkin diagram. A geometric realization of the generalized flag manifold $G/P$ is given by
\begin{equation}
\label{equation:flags}
\begin{split}
G/P \cong \big\{ ( W_1, W_2, \ldots, W_s) \ \colon \ &W_1 \leq \ldots \leq W_s \leq V, \\ &\dim W_i = k_i \text{ for } i=1,\ldots,s, \ W_s \text{ isotropic} \big\},
\end{split}
\end{equation}
the space of isotropic partial flags of type $(k_1,\ldots,k_s)$. Here $V$ is (the space of) the standard representation of $\Sp(2n,\mbbC)$.

In a special case when $P$ is a maximal parabolic subgroup, say it has a cross on the position $k$, $G/P$ is realized as the space of all $k$-dimensional isotropic subspaces in $V$, also known as the isotropic $k$-Grassmannian, and denoted by $\iGr(k,2n)$.

\subsection{Isotropic 1-Grassmannian}
Let us first consider $\mfrp =\begin{dynkin}
\dynkinline{1}{0}{2}{0}
\dynkindots{2}{0}{3}{0}
\dynkindoubleline{3}{0}{4}{0}
\dynkincross{1}{0}
\dynkindot{2}{0}
\dynkindot{3}{0}
\dynkindot{4}{0}
\end{dynkin}$. The regular Hasse diagram $W^\mfrp$ is easily found using \cite[Proposition 3.2.14.]{cap2009parabolic}. Here we give the orbit $W^\mfrp \rho$, including the ``arrow'' relation, and therefore also the ($\rho$-shifted) weights parametrizing the BGG resolution over $\iGr(1,2n)$ in the infinitesimal character $\rho$:
\begin{multline}
\label{equation:BGG_iGr_1}
(n \pbar n-1,\ldots,1) \rightarrow (n-1 \pbar n,n-2,\ldots,1) \rightarrow \ldots  \rightarrow (1 \pbar n,n-1,\ldots,2) \rightarrow \\
\rightarrow (-1 \pbar n,\ldots,2) \rightarrow (-2 \pbar n,\ldots,3,1) \rightarrow \ldots \rightarrow (-n \pbar n-1,\ldots,1) \to 0.
\end{multline}
All the operators in the resolution are of order one except $(1\pbar \ldots) \to (-1\pbar \ldots)$, which is of order two.

An infinitesimal character can be singular if its $\mfrg$-dominant representative has some repeated coordinates, or if it has a zero among its coordinates. Such a weight can have a strictly $\mfrl$-dominant $\mfrg$-conjugate only if it is semi-regular, that is, exactly one pair of coordinates coincide, or there is exactly one zero coordinate (but not both). Considering only minimal integral weights (only for notational purpose, the same calculation works for any semi-regular infinitesimal character), these are:
\begin{gather}
\label{definition:lambda_k}\lambda_k + \rho := (n-1,n-2,\ldots,k+1,k,k,k-1,\ldots,2,1), \quad 1 \leq k \leq n-1, \\
\label{definition:lambda_0}\lambda_0 + \rho := (n-1,n-2,\ldots,2,1,0),
\end{gather}
and their strictly $\mfrl$-dominant conjugates are the following:
\begin{equation}
\label{definition:lambda_k_conj}
\tilde{\lambda}_k^{\pm} + \rho := (\pm k \pbar n-1,n-2, \ldots, 1), \quad 0 \leq k \leq n-1.
\end{equation}
Since $\tilde{\lambda}^+_0=\tilde{\lambda}^-_0$, the corresponding generalized Verma module is simple (there are no other strictly $\mfrl$-dominant conjugates). But in principle, there could be a homomorphism $M_\mfrp(\tilde{\lambda}^-_k) \to M_\mfrp(\tilde{\lambda}^+_k)$. However, a simple calculation using Jantzen simplicity criterion shows that these modules are in fact simple, so such a homomorphism is trivial (see \cite{matumoto2006homomorphisms}). By Remark \ref{remark:duality}, as a consequence we have:
\begin{corollary}
There are no non-trivial singular BGG complexes over $\iGr(1,2n)$.
\end{corollary}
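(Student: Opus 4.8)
The plan is to show that, for every singular integral infinitesimal character, the collection of homogeneous sheaves that could possibly assemble into a singular BGG complex is so small, and the invariant differential operators between its members so scarce, that only the trivial (zero-differential) complex survives.

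First I would pin down the objects. A singular BGG complex over $\iGr(1,2n)=G/P$ must be built from the sheaves $\mcaO_\mfrp(\nu)$ whose $\rho$-shifted weights $\nu+\rho$ are the strictly $\mfrl$-dominant conjugates of a fixed singular integral weight. As recalled above, only a semi-regular weight admits such a conjugate, so characters that are ``more singular'' than semi-regular contribute no objects at all and hence only trivial complexes; it therefore suffices to treat the semi-regular representatives $\lambda_k$ and $\lambda_0$ of \eqref{definition:lambda_k}--\eqref{definition:lambda_0}. Their strictly $\mfrl$-dominant conjugates are precisely the $\tilde{\lambda}_k^{\pm}$ of \eqref{definition:lambda_k_conj}: since the Levi factor is $\GL(1,\mbbC)\times\Sp(2n-2,\mbbC)$, strict $\mfrl$-dominance forces the last $n-1$ coordinates to equal $(n-1,n-2,\ldots,1)$, leaving only the doubled value $\pm k$ (respectively the single $0$) free for the $\GL(1)$-slot.

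Next I would count and connect the objects. For $\lambda_0$ one has $\tilde{\lambda}_0^+=\tilde{\lambda}_0^-$, so there is a single object and nothing to link, and the complex is trivial. For $1\leq k\leq n-1$ there are exactly two objects, $\mcaO_\mfrp(\tilde{\lambda}_k^+)$ and $\mcaO_\mfrp(\tilde{\lambda}_k^-)$, and any nontrivial complex would reduce to a single invariant differential operator between them. By the contravariant correspondence of Remark \ref{remark:duality}, such an operator is identified with a homomorphism of generalized Verma modules $M_\mfrp(\tilde{\lambda}_k^-)\to M_\mfrp(\tilde{\lambda}_k^+)$, in the direction fixed by the dominance order of the two weights. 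Crucially, this correspondence sees all invariant differential operators, standard and non-standard alike, so it is enough to analyze these Verma homomorphisms.

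Finally I would prove vanishing. Applying the Jantzen simplicity criterion (as in \cite{matumoto2006homomorphisms}) to each $M_\mfrp(\tilde{\lambda}_k^{\pm})$ shows that all of these generalized Verma modules are simple. A nonzero homomorphism between two simple modules is an isomorphism, which is impossible here because $\tilde{\lambda}_k^+$ and $\tilde{\lambda}_k^-$ are distinct highest weights; hence the only homomorphism is the zero map and the connecting operator vanishes. The two objects then sit in isolation with zero differential, so the complex is trivial. I expect the Jantzen computation to be the only genuine obstacle: it requires evaluating the relevant sum over the positive roots in the nilradical $\mfru$ and checking that it produces no composition factors below the top one. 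Everything else is bookkeeping with the weight combinatorics set up above, together with the fact that Remark \ref{remark:duality} captures non-standard operators as well — which is exactly what rules out higher-order operators and lets us conclude that no nontrivial singular BGG complex exists over $\iGr(1,2n)$.
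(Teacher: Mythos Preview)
Your proposal is correct and follows essentially the same argument as the paper: reduce to semi-regular infinitesimal characters, observe that the singular orbit then consists of at most the two weights $\tilde{\lambda}_k^{\pm}$, invoke the Jantzen simplicity criterion to see that the corresponding generalized Verma modules are simple, and use the duality of Remark~\ref{remark:duality} to conclude that no nontrivial invariant differential operator connects them. The paper presents exactly this reasoning in the paragraph immediately preceding the corollary, likewise deferring the actual Jantzen computation to a remark that it is ``a simple calculation''.
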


Note that $G/P \cong \iGr(1,2n)$ is diffeomorphic to the complex projective space $\mbbP^{2n-1}$, because our form is skew-symmetric. But it is not isomorphic to it as a homogeneous space. The space $\iGr(1,2n)$ is the flat model for projective contact geometries (\cite[4.2.6.]{cap2001bernstein}).

\section{Regular and singular orbits}

From now on, we fix the standard parabolic subalgebra
\[ \mfrp=\mfrl \oplus \mfru =\begin{dynkin}
\dynkinline{1}{0}{2}{0}
\dynkinline{2}{0}{3}{0}
\dynkindots{3}{0}{4}{0}
\dynkinline{4}{0}{5}{0}
\dynkindoubleline{5}{0}{6}{0}
\dynkindot{1}{0}
\dynkincross{2}{0}
\dynkindot{3}{0}
\dynkindot{4}{0}
\dynkindot{5}{0}
\dynkindot{6}{0}
\end{dynkin}, \]
which has the Levi factor $\mfrl = \mfrgl(2,\mbbC) \oplus \mfrsp(2n-4,\mbbC)$, and the following roots:
\begin{align}
\nonumber &\Delta^+(\mfrl,\mfrh) =  \{  a_{12} \} \cup \{ a_{ij}, \ b_i, \ c_{ij} \ \colon \ 3 \leq i<j \leq n \}, \\
\label{equation:Delta_u}
\Delta(\mfru) = \{ & a_{13}, \ldots, a_{1n}, \ a_{23}, \ldots, a_{2n}, \ c_{13}, \ldots, c_{1n}, \ c_{23}, \ldots, c_{2n} \\
\nonumber & \ b_1, \ b_2, \ c_{12} \}.
\end{align}
Note that the roots $b_1$, $b_2$ and $c_{12}$ in $\Delta(\mfru)$ have the property that the crossed simple root appears twice in their decompositions. Because of this, $\mfru$ is not abelian, and $G/P \cong \iGr(2,2n)$ does not have a structure of a Hermitian symmetric space. However, there is a $|2|$-grading of $\mfrg$, given by the eigenspaces of the grading element $E=(1,1,0, \ldots, 0) \in \mfrh$, so that $\mfrg_0=\mfrl$, $\mfrg_1 \oplus \mfrg_2 = \mfru$ and  $\mfrg_{-1} \oplus \mfrg_{-2} = \mfru^-$. For details, see \cite[3.2.1]{cap2009parabolic}.

We now describe the regular Hasse diagram $W^\mfrp$ for this parabolic subalgebra, by giving the $\mfrl$-dominant part of the orbit of $\rho$ under the Weyl group, respecting the ``arrow'' relation. First note that if $w \rho = (\mu_1,\mu_2 \pbar \mu_3, \ldots, \mu_n)$ is $\mfrl$-dominant, then $\mu_1,\mu_2 \in \{\pm 1, \pm 2, \ldots, \pm n\}$, $\mu_1 > \mu_2$, $\mu_1 \neq - \mu_2$, and the rest of the coordinates are completely determined, since they must be strictly decreasing and positive. By projecting the orbit onto the first two coordinates, we get the picture in Figure \ref{figure:regular_orbit}.
\begin{figure}
\begin{center}
\tikz[x=.5cm,y=.5cm]{ 

\def\n{8} 
\def\skipa{5} 
\def\skipb{11} 

\draw[latex-latex, thin, draw=gray] (-\n+1,0)--(\n+2,0) node [right] {$\mu_1$}; 
\draw[latex-latex, thin, draw=gray] (0,-\n-2)--(0,\n-1) node [above] {$\mu_2$}; 

\newcommand{\ldominant}[2]{\draw[fill] (#1,#2) circle [radius=1pt];} 

\newcommand{\arrow}[4]{\draw [->] (.75*#1+.25*#3,.75*#2+.25*#4) -- (.25*#1+.75*#3,.25*#2+.75*#4);} 

\newcommand{\skipped}[4]{\draw [dotted,-] (.75*#1+.25*#3,.75*#2+.25*#4) -- (.25*#1+.75*#3,.25*#2+.75*#4);} 

\foreach \x in {1,...,\n} \foreach \y in {1,...,\n} {
	\ifthenelse{\not \x=\skipa \and \not \x=\skipb \and \not \y=\skipa \and \not \y=\skipb}{
		\ifthenelse{\x>\y}{\ldominant{\x}{\y};}{}; 
		\ifthenelse{\x=\y}{}{\ldominant{\x}{-\y};}; 
		\ifthenelse{\x<\y}{\ldominant{-\x}{-\y};}{}; 
	}{}
}

\arrow{3}{2}{3}{1};\arrow{4}{2}{4}{1};\arrow{4}{3}{4}{2};\arrow{6}{2}{6}{1};\arrow{6}{3}{6}{2};\arrow{6}{4}{6}{3};\arrow{7}{2}{7}{1};\arrow{7}{3}{7}{2};\arrow{7}{4}{7}{3};\arrow{8}{2}{8}{1};\arrow{8}{3}{8}{2};\arrow{8}{4}{8}{3};\arrow{8}{7}{8}{6};\arrow{3}{-1}{3}{-2};\arrow{4}{-1}{4}{-2};\arrow{4}{-2}{4}{-3};\arrow{6}{-1}{6}{-2};\arrow{6}{-2}{6}{-3};\arrow{6}{-3}{6}{-4};\arrow{7}{-1}{7}{-2};\arrow{7}{-2}{7}{-3};\arrow{7}{-3}{7}{-4};\arrow{8}{-1}{8}{-2};\arrow{8}{-2}{8}{-3};\arrow{8}{-3}{8}{-4};\arrow{8}{-6}{8}{-7};\arrow{1}{-2}{1}{-3};\arrow{1}{-3}{1}{-4};\arrow{1}{-6}{1}{-7};\arrow{1}{-7}{1}{-8};\arrow{2}{-3}{2}{-4};\arrow{2}{-6}{2}{-7};\arrow{2}{-7}{2}{-8};\arrow{3}{-6}{3}{-7};\arrow{3}{-7}{3}{-8};\arrow{4}{-6}{4}{-7};\arrow{4}{-7}{4}{-8};\arrow{6}{-7}{6}{-8};\arrow{-1}{-2}{-1}{-3};\arrow{-1}{-3}{-1}{-4};\arrow{-1}{-6}{-1}{-7};\arrow{-1}{-7}{-1}{-8};\arrow{-2}{-3}{-2}{-4};\arrow{-2}{-6}{-2}{-7};\arrow{-2}{-7}{-2}{-8};\arrow{-3}{-6}{-3}{-7};\arrow{-3}{-7}{-3}{-8};\arrow{-4}{-6}{-4}{-7};\arrow{-4}{-7}{-4}{-8};\arrow{-6}{-7}{-6}{-8};\arrow{2}{1}{2}{-1};\arrow{3}{1}{3}{-1};\arrow{4}{1}{4}{-1};\arrow{6}{1}{6}{-1};\arrow{7}{1}{7}{-1};\arrow{8}{1}{8}{-1};\arrow{2}{-1}{2}{-3};\arrow{3}{-2}{3}{-4};\arrow{7}{-6}{7}{-8};\arrow{3}{1}{2}{1};\arrow{4}{1}{3}{1};\arrow{4}{2}{3}{2};\arrow{7}{1}{6}{1};\arrow{7}{2}{6}{2};\arrow{7}{3}{6}{3};\arrow{7}{4}{6}{4};\arrow{8}{1}{7}{1};\arrow{8}{2}{7}{2};\arrow{8}{3}{7}{3};\arrow{8}{4}{7}{4};\arrow{8}{6}{7}{6};\arrow{3}{-1}{2}{-1};\arrow{4}{-1}{3}{-1};\arrow{4}{-2}{3}{-2};\arrow{7}{-1}{6}{-1};\arrow{7}{-2}{6}{-2};\arrow{7}{-3}{6}{-3};\arrow{7}{-4}{6}{-4};\arrow{8}{-1}{7}{-1};\arrow{8}{-2}{7}{-2};\arrow{8}{-3}{7}{-3};\arrow{8}{-4}{7}{-4};\arrow{8}{-6}{7}{-6};\arrow{2}{-3}{1}{-3};\arrow{2}{-4}{1}{-4};\arrow{3}{-4}{2}{-4};\arrow{2}{-6}{1}{-6};\arrow{3}{-6}{2}{-6};\arrow{4}{-6}{3}{-6};\arrow{2}{-7}{1}{-7};\arrow{3}{-7}{2}{-7};\arrow{4}{-7}{3}{-7};\arrow{2}{-8}{1}{-8};\arrow{3}{-8}{2}{-8};\arrow{4}{-8}{3}{-8};\arrow{7}{-8}{6}{-8};\arrow{-1}{-3}{-2}{-3};\arrow{-1}{-4}{-2}{-4};\arrow{-2}{-4}{-3}{-4};\arrow{-1}{-6}{-2}{-6};\arrow{-2}{-6}{-3}{-6};\arrow{-3}{-6}{-4}{-6};\arrow{-1}{-7}{-2}{-7};\arrow{-2}{-7}{-3}{-7};\arrow{-3}{-7}{-4}{-7};\arrow{-1}{-8}{-2}{-8};\arrow{-2}{-8}{-3}{-8};\arrow{-3}{-8}{-4}{-8};\arrow{-6}{-8}{-7}{-8};\arrow{1}{-2}{-1}{-2};\arrow{1}{-3}{-1}{-3};\arrow{1}{-4}{-1}{-4};\arrow{1}{-6}{-1}{-6};\arrow{1}{-7}{-1}{-7};\arrow{1}{-8}{-1}{-8};\arrow{3}{-2}{1}{-2};\arrow{4}{-3}{2}{-3};\arrow{8}{-7}{6}{-7};\arrow{2}{-1}{1}{-2};\arrow{3}{-2}{2}{-3};\arrow{4}{-3}{3}{-4};\arrow{7}{-6}{6}{-7};\arrow{8}{-7}{7}{-8};
\skipped{4}{-8}{6}{-8};\skipped{4}{-7}{6}{-7};\skipped{4}{-6}{6}{-6};\skipped{4}{-4}{6}{-4};\skipped{4}{-3}{6}{-3};\skipped{4}{-2}{6}{-2};\skipped{4}{-1}{6}{-1};\skipped{4}{1}{6}{1};\skipped{4}{2}{6}{2};\skipped{4}{3}{6}{3};\skipped{-6}{-8}{-4}{-8};\skipped{-6}{-7}{-4}{-7};\skipped{7}{4}{7}{6};\skipped{8}{4}{8}{6};\skipped{-3}{-6}{-3}{-4};\skipped{-2}{-6}{-2}{-4};\skipped{-1}{-6}{-1}{-4};\skipped{1}{-6}{1}{-4};\skipped{2}{-6}{2}{-4};\skipped{3}{-6}{3}{-4};\skipped{4}{-6}{4}{-4};\skipped{6}{-6}{6}{-4};\skipped{7}{-6}{7}{-4};\skipped{8}{-6}{8}{-4};

\pgfmathtruncatemacro{\skipaa}{\skipa-1};
\node at (\n,\n-1-.5) [right] {\scriptsize $a_{23}$};
\foreach \x in {3,...,\skipaa} {	
	\pgfmathtruncatemacro{\xx}{\x-2};
	\node at (\n,\x-.5) [right] {\scriptsize $a_{2,n-\xx}$};
	}
\node at (\n,2-.5) [right] {\scriptsize $a_{2n}$};
\node at (\n,1-.7) [right] {\scriptsize $b_{2}$};
\node at (\n,-1-.5) [right] {\scriptsize $c_{2n}$};
\foreach \x in {3,...,\skipaa} {	
	\pgfmathtruncatemacro{\xx}{\x-2};
	\node at (\n,-\x+.5) [right] {\scriptsize $c_{2,n-\xx}$};
	}
\node at (\n,-\n+1+.5) [right] {\scriptsize $c_{23}$};
\node[rotate=-45] at (\n-1+.5,-\n+1-.5) [right] {\scriptsize $c_{12}$};
\node[rotate=-90] at (\n-1-.5,-\n) [right] {\scriptsize $a_{13}$};
\foreach \x in {3,...,\skipaa} {	
	\pgfmathtruncatemacro{\xx}{\x-2};
	\node[rotate=-90] at (\x-.5,-\n) [right] {\scriptsize $a_{1,n-\xx}$};
	}
\node[rotate=-90] at (2-.5,-\n) [right] {\scriptsize $a_{1n}$};
\node[rotate=-90] at (1-.6,-\n) [right] {\scriptsize $b_{1}$};
\node[rotate=-90] at (-1-.5,-\n) [right] {\scriptsize $c_{1n}$};
\foreach \x in {3,...,\skipaa} {	
	\pgfmathtruncatemacro{\xx}{\x-2};
	\node[rotate=-90] at (-\x+1-.5,-\n) [right] {\scriptsize $c_{1,n-\xx}$};
	}
\node[rotate=-90] at (-\n+1+.5,-\n) [right] {\scriptsize $c_{13}$};
}

\end{center}

\caption{Regular orbit of $\rho$}
\label{figure:regular_orbit}
\end{figure}
In each square or hexagon, parallel arrows have the same label, which stands for the reflection in the definition of the ``arrow'' relation. The picture appeared already in \cite{boe1985comparison}. The details can be found in \cite[4.3]{mrden2017phdthesis}. Recall again that this also gives the BGG resolution over $\iGr(2,2n)$ in the infinitesimal character $\rho$. Short arrows represent operators of order one, and longer arrows represent operators of order two. The resolution has the same shape in all other regular infinitesimal characters, but orders of the operators are higher in general. 

Plugging in a singular weight $\lambda+ \rho$ instead of a regular one, not all elements of its $W^\mfrp$-orbit need to be strictly $\mfrl$-dominant. Since such weights do not define a homogeneous sheaf or a generalized Verma module, we ignore them. We consider only the strictly $\mfrl$-dominant part of the $W^\mfrp$-orbit of $\lambda+ \rho$, and call it the singular orbit of the singular weight $\lambda+ \rho$. Some of the standard operators in the singular orbit become zero maps, and some become identities.

The purpose of this paper is to construct invariant differential operators that connect some of the connected components of the singular orbits, and to prove that certain sequences that include our constructed operators are exact. We will do this under the hypothesis that our infinitesimal character is semi-regular, and not orthogonal to the long simple root.
\begin{remark}
\label{remark:about_minimality}
Minimal semi-regular infinitesimal characters not orthogonal to the long simple root are given in (\ref{definition:lambda_k}). We will work only with these minimal ones, but the reader can notice that for our calculations the minimality is not necessary; only the ordering among the coordinates plays a role, and not their actual values. Alternatively, one can apply Jantzen-Zuckerman translation functors (see \cite[3.7]{baston1985algebraic}) to obtain the result in non-minimal infinitesimal characters from the result in the minimal case.
\end{remark}

For visual purpose, we put every strictly $\mfrl$-dominant weight $w(\lambda+\rho)$ in the coordinate system, with the same coordinates as $w \in W^\mfrp$ had in Figure \ref{figure:regular_orbit}. Note that in this coordinate system, the coordinates of $w(\lambda+\rho)$ do not have to correspond to its first two coordinates in $\mfrh^\ast$. But whenever we refer to an element of the singular orbit (or a homogeneous sheaf, or a generalized Verma module) as an ordered pair $(\mu_1,\mu_2)$, if not mentioned otherwise, we always mean the first two $\mfrh^\ast$-coordinates $(\mu_1,\mu_2 \pbar \ldots)$, and the rest of the coordinates are uniquely determined if an infinitesimal character is fixed.

\begin{proposition}
The singular orbits of $\lambda_k + \rho$ from (\ref{definition:lambda_k}) and (\ref{definition:lambda_0}) are given in Figure \ref{figure:singular_orbit_n_1} ($k=n-1$), Figure \ref{figure:singular_orbit_k} ($n-1>k>1$), Figure \ref{figure:singular_orbit_1} ($k=1$) and Figure \ref{figure:singular_orbit_0} ($k=0$). In each of the figures, a point with coordinates $(x_1,x_2)$ represents the weight $(\mu_1,\mu_2)$, where
\begin{equation}
\label{equation:singular_orbit_coordinates}
\mu_i =  \begin{cases}
    x_i & \colon |x_i| \leq k \\
    x_i - \sign(x_i) & \colon |x_i| > k
     \end{cases}.
\end{equation}
\end{proposition}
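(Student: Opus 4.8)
The plan is to reduce everything to a single combinatorial device, the coordinatewise contraction. Define $\gamma\colon \mathbb{Z}\to\mathbb{Z}$ by $\gamma(t)=t$ for $|t|\le k$ and $\gamma(t)=t-\sign(t)$ for $|t|>k$, and let $c\colon \mathbb{R}^n\to\mathbb{R}^n$ apply $\gamma$ to each coordinate. The key observation is that $\gamma$ is odd and $c$ acts entrywise, so $c$ commutes with every coordinate permutation and every sign change; since these generate $W_\mfrg$ in type $C_n$, we get $c\circ w = w\circ c$ on $\mfrh^\ast$ for all $w\in W_\mfrg$. A direct check gives $c(\rho)=\lambda_k+\rho$ (and the analogue for $k=0$), whence
\[
w(\lambda_k+\rho)=w(c\rho)=c(w\rho)\qquad\text{for every }w\in W_\mfrg .
\]
Thus the entire singular orbit is the entrywise $\gamma$-image of the regular orbit, and in particular the first two $\mfrh^\ast$-coordinates of the weight sitting at the Figure~\ref{figure:regular_orbit} position $(x_1,x_2)$ are exactly $(\gamma(x_1),\gamma(x_2))$. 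This is precisely (\ref{equation:singular_orbit_coordinates}), so the coordinate dictionary is immediate; the remaining work is to determine which positions actually appear.

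Next I would decide, for each $w\in W^\mfrp$, whether $c(w\rho)$ is strictly $\mfrl$-dominant. Writing $w\rho=(x_1,x_2\pbar x_3,\dots,x_n)$ with $x_3>\dots>x_n>0$, the tail of $c(w\rho)$ is $(\gamma(x_3),\dots,\gamma(x_n))$; since $\gamma$ is nondecreasing and collapses only the pair $\{k,k+1\}$ to $k$ (for $k\ge1$), resp. sends $\pm1$ to $0$ (for $k=0$), strict $\mfrsp(2n-4)$-dominance of the tail fails exactly when both $k$ and $k+1$ occur among $x_3,\dots,x_n$ (resp. when some $x_i=1$ occurs there). As the tail is $\{1,\dots,n\}\setminus\{|x_1|,|x_2|\}$, this says the tail survives precisely when $|x_1|\in\{k,k+1\}$ or $|x_2|\in\{k,k+1\}$. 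Strict $\mfrgl(2)$-dominance fails exactly when $\gamma(x_1)=\gamma(x_2)$, i.e. only at the two positions $(k+1,k)$ and $(-k,-(k+1))$. Combining, $c(w\rho)$ survives precisely when one of $|x_1|,|x_2|$ lies in $\{k,k+1\}$ and the pair does not collapse, which carves out the cross-shaped support of the four figures. The two coincident entries of the infinitesimal character also produce one genuinely new weight with no strictly-dominant regular counterpart, namely $(k,-k\pbar\dots)$ for $k\ge1$ (resp. the weights carrying a zero in the $\mfrgl(2)$-block for $k=0$), arising when both copies of the repeated value (resp. the zero) land in the first two coordinates.

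Finally I would handle the folding and the case split. Because $\Stab(\lambda_k+\rho)=\{1,\sigma\}$ with $\sigma=\sigma_{a_{n-k,n-k+1}}$ the reflection in the repeated coordinate for $k\ge1$ (resp. $\sigma=\sigma_{b_n}$ for $k=0$), two diagram positions $w$ and $w\sigma$ can carry the same singular weight — for instance $(k,-(k+1))$ and $(k+1,-k)$ both contract to $(k,-k\pbar\dots)$ — and determining when both lie in $W^\mfrp$ and survive yields exactly the identifications recorded in the figures. The regimes $k=n-1$, $1<k<n-1$, $k=1$, $k=0$ differ only in boundary behavior: whether the collapsed pair $\{k,k+1\}$ reaches the top value $n$ (case $k=n-1$), whether $k-1=0$ truncates the tail from below (case $k=1$), and the qualitatively different zero-crossing of $\gamma$ (case $k=0$). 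Each of Figures~\ref{figure:singular_orbit_n_1}, \ref{figure:singular_orbit_k}, \ref{figure:singular_orbit_1} and \ref{figure:singular_orbit_0} is then confirmed by restricting the regular diagram to the surviving locus and applying $c$. The main obstacle is exactly this last bookkeeping: the non-injectivity of $c$ simultaneously destroys strictness on certain tails and folds pairs of regular positions together, and it is this degeneration — together with the exceptional weight $(k,-k\pbar\dots)$ — that forces a four-way case analysis rather than a single uniform picture.
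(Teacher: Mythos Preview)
Your contraction map $c$ is a clean formalization of what the paper dismisses as ``clear'': equivariance of $c$ under $W_\mfrg$ immediately yields (\ref{equation:singular_orbit_coordinates}), and your survival criterion (the tail remains strictly dominant iff one of $|x_1|,|x_2|$ lies in $\{k,k+1\}$) is exactly the paper's one-line observation that both copies of $k$ cannot sit to the right of the bar. So the underlying argument is the same as the paper's, only spelled out in more detail.

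Two points, however. First, your remark that $(k,-k\pbar\dots)$ is a ``genuinely new weight with no strictly-dominant regular counterpart'' is incorrect: the Figure~\ref{figure:regular_orbit} positions $(k+1,-k)$ and $(k,-(k+1))$ both pass your survival test and both contract under $c$ to $(k,-k\pbar\dots)$, so this weight already arises from $W^\mfrp$ (and indeed these two equal points are drawn in the figures). Nothing in your argument relies on this remark, so just delete it. Second, the figures encode more than the point set: they also record which standard operators become identities and, crucially, which become zero. The paper's proof explicitly addresses this, invoking Baston \cite{baston1985algebraic} to show that the in-principle standard operators $(\mu_1,1)\to(\mu_1,-1)$ and $(1,\mu_2)\to(-1,\mu_2)$ for $k=1$, and $(1,0)\to(-1,0)$ for $k=0$, are actually trivial; this is why those arrows are absent from Figures~\ref{figure:singular_orbit_1} and~\ref{figure:singular_orbit_0}. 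Your proposal does not touch the arrow structure, so as written it establishes only the point-set part of the Proposition.
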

\begin{figure}
\begin{center}
\tikz[x=.4cm,y=.4cm]{ 

\def\n{8} 
\def\skipa{4} 
\def\skipb{11} 
\def\k{7} 

\pgfmathtruncatemacro{\kk}{\k + 1}
\pgfmathtruncatemacro{\kkk}{\kk + 1}

\def\margin{.35};
\draw[draw=black!10,fill=black!10] (\k-\margin,-\n-\margin) rectangle ++(1+2*\margin,\n+\k-1+2*\margin);
\draw[draw=black!10,fill=black!10] (-\k+1-\margin,-\k-1-\margin) rectangle ++(\n+\k-1+2*\margin,1+2*\margin);
\ifthenelse{\kk=\n}{
	\node at (\n+.5,-\k-.5) [right] {\scriptsize $(\mu_1,-n+1)$};
	\node[rotate=-90] at (\k+.5,-\n-.5) [right] {\scriptsize $(n-1,\mu_2)$};}{
	\draw[draw=black!10,fill=black!10] (\k+2-\margin,\k-\margin) rectangle ++(\n-\k-2+2*\margin,1+2*\margin);
	\draw[draw=black!10,fill=black!10] (-\k-1-\margin,-\n-\margin) rectangle ++(1+2*\margin,\n-\k-2+2*\margin);
	\node at (\n+.5,\k+.5) [right] {\scriptsize $(\mu_1,k)$};
	\node[rotate=-90] at (-\k-.5,-\n-.5) [right] {\scriptsize $(-k,\mu_2)$};
	\node at (\n+.5,-\k-.5) [right] {\scriptsize $(\mu_1,-k)$};
	\node[rotate=-90] at (\k+.5,-\n-.5) [right] {\scriptsize $(k,\mu_2)$};
}

\draw[latex-latex, thin, draw=gray] (-\n+1,0)--(\n+1,0) node [right] {}; 
\draw[latex-latex, thin, draw=gray] (0,-\n-1)--(0,\n-1) node [above] {}; 

\newcommand{\ldominant}[2]{\draw[fill] (#1,#2) circle [radius=1pt];} 

\newcommand{\arrow}[4]{\draw [->] (.75*#1+.25*#3,.75*#2+.25*#4) -- (.25*#1+.75*#3,.25*#2+.75*#4);} 

\newcommand{\skipped}[4]{\draw [dotted,-] (.75*#1+.25*#3,.75*#2+.25*#4) -- (.25*#1+.75*#3,.25*#2+.75*#4);} 

\def\lengthcross{.1}
\newcommand{\trivial}[2]{\draw[black!50] (#1-\lengthcross,#2-\lengthcross)--(#1+\lengthcross,#2+\lengthcross); \draw[black!50] (#1-\lengthcross,#2+\lengthcross)--(#1+\lengthcross,#2-\lengthcross);} 

\def\equaldis{.07}
\newcommand{\equal}[4]{
	\draw (.75*#1+.25*#3+\equaldis*#2-\equaldis*#4,.75*#2+.25*#4+\equaldis*#3-\equaldis*#1) -- (.25*#1+.75*#3+\equaldis*#2-\equaldis*#4,.25*#2+.75*#4+\equaldis*#3-\equaldis*#1);
	\draw (.75*#1+.25*#3-\equaldis*#2+\equaldis*#4,.75*#2+.25*#4-\equaldis*#3+\equaldis*#1) -- (.25*#1+.75*#3-\equaldis*#2+\equaldis*#4,.25*#2+.75*#4-\equaldis*#3+\equaldis*#1);
} 

\foreach \x in {1,...,\n} \foreach \y in {1,...,\n} {
	\ifthenelse{\not \x=\skipa \and \not \x=\skipb \and \not \y=\skipa \and \not \y=\skipb \and \not \x=\k \and \not \x=\kk \and \not \y=\k \and \not \y=\kk}{
		\ifthenelse{\x>\y}{\trivial{\x}{\y};}{}; 
		\ifthenelse{\x=\y}{}{\trivial{\x}{-\y};}; 
		\ifthenelse{\x<\y}{\trivial{-\x}{-\y};}{}; 
		}{}
	}
\trivial{\kk}{\k}; \trivial{-\k}{-\kk};

\ifthenelse{\kk<\n}{\foreach \x in {\kkk,...,\n} {
	\ifthenelse{\not \x=\skipb}{\ldominant{\x}{\k}; \ldominant{\x}{\kk};}{}
	}}{}
\foreach \x in {-\n,...,\k} {
	\ifthenelse{\not \x=\k \and \not \x=\kk \and \not \x=\skipa \and \not \x=0 \and \not \x=-\skipa \and \not \x=-\k \and \not \x=-\kk \and \not \x=-\skipb}{\ldominant{\k}{\x}; \ldominant{\kk}{\x};}{}
	}
\ifthenelse{\kk<\n}{\foreach \x in {-\n,...,-\kkk} {
	\ifthenelse{\not \x=-\skipb}{\ldominant{-\k}{\x}; \ldominant{-\kk}{\x};}{}
	}}{}
\foreach \x in {-\k,...,\n} {
	\ifthenelse{\not \x=\k \and \not \x=\kk \and \not \x=\skipa \and \not \x=\skipb \and \not \x=-\skipb \and \not \x=0 \and \not \x=-\skipa \and \not \x=-\k \and \not \x=-\kk}{\ldominant{\x}{-\k}; \ldominant{\x}{-\kk};}{}
	}
\ldominant{\k}{-\kk}; \ldominant{\kk}{-\k};

\arrow{7}{-5}{7}{-6};\arrow{7}{-2}{7}{-3};\arrow{7}{-1}{7}{-2};\arrow{7}{2}{7}{1};\arrow{7}{3}{7}{2};\arrow{7}{6}{7}{5};\arrow{8}{-6}{8}{-7};\arrow{8}{-5}{8}{-6};\arrow{8}{-2}{8}{-3};\arrow{8}{-1}{8}{-2};\arrow{8}{2}{8}{1};\arrow{8}{3}{8}{2};\arrow{8}{6}{8}{5};\arrow{-5}{-7}{-6}{-7};\arrow{-2}{-7}{-3}{-7};\arrow{-1}{-7}{-2}{-7};\arrow{2}{-7}{1}{-7};\arrow{3}{-7}{2}{-7};\arrow{6}{-7}{5}{-7};\arrow{-5}{-8}{-6}{-8};\arrow{-2}{-8}{-3}{-8};\arrow{-1}{-8}{-2}{-8};\arrow{2}{-8}{1}{-8};\arrow{3}{-8}{2}{-8};\arrow{6}{-8}{5}{-8};\arrow{7}{-8}{6}{-8};\arrow{7}{1}{7}{-1};\arrow{8}{1}{8}{-1};\arrow{7}{-6}{7}{-8};\arrow{8}{-7}{6}{-7};\arrow{7}{-6}{6}{-7};\arrow{1}{-7}{-1}{-7};\arrow{1}{-8}{-1}{-8};
\skipped{3}{-8}{5}{-8};\skipped{3}{-7}{5}{-7};\skipped{3}{-6}{5}{-6};\skipped{3}{-5}{5}{-5};\skipped{3}{-3}{5}{-3};\skipped{3}{-2}{5}{-2};\skipped{3}{-1}{5}{-1};\skipped{3}{1}{5}{1};\skipped{3}{2}{5}{2};\skipped{-5}{-8}{-3}{-8};\skipped{-5}{-7}{-3}{-7};\skipped{-5}{-6}{-3}{-6};\skipped{6}{3}{6}{5};\skipped{7}{3}{7}{5};\skipped{8}{3}{8}{5};\skipped{-2}{-5}{-2}{-3};\skipped{-1}{-5}{-1}{-3};\skipped{1}{-5}{1}{-3};\skipped{2}{-5}{2}{-3};\skipped{3}{-5}{3}{-3};\skipped{5}{-5}{5}{-3};\skipped{6}{-5}{6}{-3};\skipped{7}{-5}{7}{-3};\skipped{8}{-5}{8}{-3};
\equal{8}{-7}{7}{-8};\equal{7}{-6}{8}{-6};\equal{7}{-5}{8}{-5};\equal{7}{-3}{8}{-3};\equal{7}{-2}{8}{-2};\equal{7}{-1}{8}{-1};\equal{7}{1}{8}{1};\equal{7}{2}{8}{2};\equal{7}{3}{8}{3};\equal{7}{5}{8}{5};\equal{7}{6}{8}{6};\equal{-6}{-7}{-6}{-8};\equal{-5}{-7}{-5}{-8};\equal{-3}{-7}{-3}{-8};\equal{-2}{-7}{-2}{-8};\equal{-1}{-7}{-1}{-8};\equal{1}{-7}{1}{-8};\equal{2}{-7}{2}{-8};\equal{3}{-7}{3}{-8};\equal{5}{-7}{5}{-8};\equal{6}{-7}{6}{-8};

}
\end{center}

\caption{Singular orbit of $\lambda_{n-1} + \rho$}
\label{figure:singular_orbit_n_1}
\end{figure}
\begin{figure}
\begin{center}
\tikz[x=.35cm,y=.35cm]{ 

\def\n{14} 
\def\skipa{4} 
\def\skipb{11} 
\def\k{7} 

\pgfmathtruncatemacro{\kk}{\k + 1}
\pgfmathtruncatemacro{\kkk}{\kk + 1}

\def\margin{.35};
\draw[draw=black!10,fill=black!10] (\k-\margin,-\n-\margin) rectangle ++(1+2*\margin,\n+\k-1+2*\margin);
\draw[draw=black!10,fill=black!10] (-\k+1-\margin,-\k-1-\margin) rectangle ++(\n+\k-1+2*\margin,1+2*\margin);
\ifthenelse{\kk=\n}{
	\node at (\n+.5,-\k-.5) [right] {\scriptsize $(\mu_1,-n+1)$};
	\node[rotate=-90] at (\k+.5,-\n-.5) [right] {\scriptsize $(n-1,\mu_2)$};}{
	\draw[draw=black!10,fill=black!10] (\k+2-\margin,\k-\margin) rectangle ++(\n-\k-2+2*\margin,1+2*\margin);
	\draw[draw=black!10,fill=black!10] (-\k-1-\margin,-\n-\margin) rectangle ++(1+2*\margin,\n-\k-2+2*\margin);
	\node at (\n+.5,\k+.5) [right] {\scriptsize $(\mu_1,k)$};
	\node[rotate=-90] at (-\k-.5,-\n-.5) [right] {\scriptsize $(-k,\mu_2)$};
	\node at (\n+.5,-\k-.5) [right] {\scriptsize $(\mu_1,-k)$};
	\node[rotate=-90] at (\k+.5,-\n-.5) [right] {\scriptsize $(k,\mu_2)$};
}

\draw[latex-latex, thin, draw=gray] (-\n+1,0)--(\n+1,0) node [right] {}; 
\draw[latex-latex, thin, draw=gray] (0,-\n-1)--(0,\n-1) node [above] {}; 

\newcommand{\ldominant}[2]{\draw[fill] (#1,#2) circle [radius=1pt];} 

\newcommand{\arrow}[4]{\draw [->] (.75*#1+.25*#3,.75*#2+.25*#4) -- (.25*#1+.75*#3,.25*#2+.75*#4);} 

\newcommand{\skipped}[4]{\draw [dotted,-] (.75*#1+.25*#3,.75*#2+.25*#4) -- (.25*#1+.75*#3,.25*#2+.75*#4);} 

\def\lengthcross{.1}
\newcommand{\trivial}[2]{\draw[black!50] (#1-\lengthcross,#2-\lengthcross)--(#1+\lengthcross,#2+\lengthcross); \draw[black!50] (#1-\lengthcross,#2+\lengthcross)--(#1+\lengthcross,#2-\lengthcross);} 

\def\equaldis{.07}
\newcommand{\equal}[4]{
	\draw (.75*#1+.25*#3+\equaldis*#2-\equaldis*#4,.75*#2+.25*#4+\equaldis*#3-\equaldis*#1) -- (.25*#1+.75*#3+\equaldis*#2-\equaldis*#4,.25*#2+.75*#4+\equaldis*#3-\equaldis*#1);
	\draw (.75*#1+.25*#3-\equaldis*#2+\equaldis*#4,.75*#2+.25*#4-\equaldis*#3+\equaldis*#1) -- (.25*#1+.75*#3-\equaldis*#2+\equaldis*#4,.25*#2+.75*#4-\equaldis*#3+\equaldis*#1);
} 

\foreach \x in {1,...,\n} \foreach \y in {1,...,\n} {
	\ifthenelse{\not \x=\skipa \and \not \x=\skipb \and \not \y=\skipa \and \not \y=\skipb \and \not \x=\k \and \not \x=\kk \and \not \y=\k \and \not \y=\kk}{
		\ifthenelse{\x>\y}{\trivial{\x}{\y};}{}; 
		\ifthenelse{\x=\y}{}{\trivial{\x}{-\y};}; 
		\ifthenelse{\x<\y}{\trivial{-\x}{-\y};}{}; 
		}{}
	}
\trivial{\kk}{\k}; \trivial{-\k}{-\kk};

\ifthenelse{\kk<\n}{\foreach \x in {\kkk,...,\n} {
	\ifthenelse{\not \x=\skipb}{\ldominant{\x}{\k}; \ldominant{\x}{\kk};}{}
	}}{}
\foreach \x in {-\n,...,\k} {
	\ifthenelse{\not \x=\k \and \not \x=\kk \and \not \x=\skipa \and \not \x=0 \and \not \x=-\skipa \and \not \x=-\k \and \not \x=-\kk \and \not \x=-\skipb}{\ldominant{\k}{\x}; \ldominant{\kk}{\x};}{}
	}
\ifthenelse{\kk<\n}{\foreach \x in {-\n,...,-\kkk} {
	\ifthenelse{\not \x=-\skipb}{\ldominant{-\k}{\x}; \ldominant{-\kk}{\x};}{}
	}}{}
\foreach \x in {-\k,...,\n} {
	\ifthenelse{\not \x=\k \and \not \x=\kk \and \not \x=\skipa \and \not \x=\skipb \and \not \x=-\skipb \and \not \x=0 \and \not \x=-\skipa \and \not \x=-\k \and \not \x=-\kk}{\ldominant{\x}{-\k}; \ldominant{\x}{-\kk};}{}
	}
\ldominant{\k}{-\kk}; \ldominant{\kk}{-\k};

\arrow{10}{7}{9}{7};\arrow{13}{7}{12}{7};\arrow{14}{7}{13}{7};\arrow{10}{8}{9}{8};\arrow{13}{8}{12}{8};\arrow{14}{8}{13}{8};\arrow{7}{-13}{7}{-14};\arrow{7}{-12}{7}{-13};\arrow{7}{-9}{7}{-10};\arrow{7}{-8}{7}{-9};\arrow{7}{-5}{7}{-6};\arrow{7}{-2}{7}{-3};\arrow{7}{-1}{7}{-2};\arrow{7}{2}{7}{1};\arrow{7}{3}{7}{2};\arrow{7}{6}{7}{5};\arrow{8}{-13}{8}{-14};\arrow{8}{-12}{8}{-13};\arrow{8}{-9}{8}{-10};\arrow{8}{-6}{8}{-7};\arrow{8}{-5}{8}{-6};\arrow{8}{-2}{8}{-3};\arrow{8}{-1}{8}{-2};\arrow{8}{2}{8}{1};\arrow{8}{3}{8}{2};\arrow{8}{6}{8}{5};\arrow{-5}{-7}{-6}{-7};\arrow{-2}{-7}{-3}{-7};\arrow{-1}{-7}{-2}{-7};\arrow{2}{-7}{1}{-7};\arrow{3}{-7}{2}{-7};\arrow{6}{-7}{5}{-7};\arrow{9}{-7}{8}{-7};\arrow{10}{-7}{9}{-7};\arrow{13}{-7}{12}{-7};\arrow{14}{-7}{13}{-7};\arrow{-5}{-8}{-6}{-8};\arrow{-2}{-8}{-3}{-8};\arrow{-1}{-8}{-2}{-8};\arrow{2}{-8}{1}{-8};\arrow{3}{-8}{2}{-8};\arrow{6}{-8}{5}{-8};\arrow{7}{-8}{6}{-8};\arrow{10}{-8}{9}{-8};\arrow{13}{-8}{12}{-8};\arrow{14}{-8}{13}{-8};\arrow{7}{1}{7}{-1};\arrow{8}{1}{8}{-1};\arrow{7}{-6}{7}{-8};\arrow{8}{-7}{6}{-7};\arrow{7}{-6}{6}{-7};\arrow{1}{-7}{-1}{-7};\arrow{1}{-8}{-1}{-8};\arrow{9}{-8}{8}{-9};\arrow{8}{-7}{8}{-9};\arrow{9}{-8}{7}{-8};\arrow{-7}{-13}{-7}{-14};\arrow{-7}{-12}{-7}{-13};\arrow{-7}{-9}{-7}{-10};\arrow{-8}{-13}{-8}{-14};\arrow{-8}{-12}{-8}{-13};\arrow{-8}{-9}{-8}{-10};
\skipped{10}{-14}{12}{-14};\skipped{10}{-13}{12}{-13};\skipped{10}{-12}{12}{-12};\skipped{10}{-10}{12}{-10};\skipped{10}{-9}{12}{-9};\skipped{10}{-8}{12}{-8};\skipped{10}{-7}{12}{-7};\skipped{10}{-6}{12}{-6};\skipped{10}{-5}{12}{-5};\skipped{10}{-3}{12}{-3};\skipped{10}{-2}{12}{-2};\skipped{10}{-1}{12}{-1};\skipped{10}{1}{12}{1};\skipped{10}{2}{12}{2};\skipped{10}{3}{12}{3};\skipped{10}{5}{12}{5};\skipped{10}{6}{12}{6};\skipped{10}{7}{12}{7};\skipped{10}{8}{12}{8};\skipped{10}{9}{12}{9};\skipped{3}{-14}{5}{-14};\skipped{3}{-13}{5}{-13};\skipped{3}{-12}{5}{-12};\skipped{3}{-10}{5}{-10};\skipped{3}{-9}{5}{-9};\skipped{3}{-8}{5}{-8};\skipped{3}{-7}{5}{-7};\skipped{3}{-6}{5}{-6};\skipped{3}{-5}{5}{-5};\skipped{3}{-3}{5}{-3};\skipped{3}{-2}{5}{-2};\skipped{3}{-1}{5}{-1};\skipped{3}{1}{5}{1};\skipped{3}{2}{5}{2};\skipped{-5}{-14}{-3}{-14};\skipped{-5}{-13}{-3}{-13};\skipped{-5}{-12}{-3}{-12};\skipped{-5}{-10}{-3}{-10};\skipped{-5}{-9}{-3}{-9};\skipped{-5}{-8}{-3}{-8};\skipped{-5}{-7}{-3}{-7};\skipped{-5}{-6}{-3}{-6};\skipped{-12}{-14}{-10}{-14};\skipped{-12}{-13}{-10}{-13};\skipped{13}{10}{13}{12};\skipped{14}{10}{14}{12};\skipped{6}{3}{6}{5};\skipped{7}{3}{7}{5};\skipped{8}{3}{8}{5};\skipped{9}{3}{9}{5};\skipped{10}{3}{10}{5};\skipped{12}{3}{12}{5};\skipped{13}{3}{13}{5};\skipped{14}{3}{14}{5};\skipped{-2}{-5}{-2}{-3};\skipped{-1}{-5}{-1}{-3};\skipped{1}{-5}{1}{-3};\skipped{2}{-5}{2}{-3};\skipped{3}{-5}{3}{-3};\skipped{5}{-5}{5}{-3};\skipped{6}{-5}{6}{-3};\skipped{7}{-5}{7}{-3};\skipped{8}{-5}{8}{-3};\skipped{9}{-5}{9}{-3};\skipped{10}{-5}{10}{-3};\skipped{12}{-5}{12}{-3};\skipped{13}{-5}{13}{-3};\skipped{14}{-5}{14}{-3};\skipped{-9}{-12}{-9}{-10};\skipped{-8}{-12}{-8}{-10};\skipped{-7}{-12}{-7}{-10};\skipped{-6}{-12}{-6}{-10};\skipped{-5}{-12}{-5}{-10};\skipped{-3}{-12}{-3}{-10};\skipped{-2}{-12}{-2}{-10};\skipped{-1}{-12}{-1}{-10};\skipped{1}{-12}{1}{-10};\skipped{2}{-12}{2}{-10};\skipped{3}{-12}{3}{-10};\skipped{5}{-12}{5}{-10};\skipped{6}{-12}{6}{-10};\skipped{7}{-12}{7}{-10};\skipped{8}{-12}{8}{-10};\skipped{9}{-12}{9}{-10};\skipped{10}{-12}{10}{-10};\skipped{12}{-12}{12}{-10};\skipped{13}{-12}{13}{-10};\skipped{14}{-12}{14}{-10};
\equal{8}{-7}{7}{-8};\equal{9}{7}{9}{8};\equal{10}{7}{10}{8};\equal{12}{7}{12}{8};\equal{13}{7}{13}{8};\equal{14}{7}{14}{8};\equal{7}{-14}{8}{-14};\equal{7}{-13}{8}{-13};\equal{7}{-12}{8}{-12};\equal{7}{-10}{8}{-10};\equal{7}{-9}{8}{-9};\equal{7}{-6}{8}{-6};\equal{7}{-5}{8}{-5};\equal{7}{-3}{8}{-3};\equal{7}{-2}{8}{-2};\equal{7}{-1}{8}{-1};\equal{7}{1}{8}{1};\equal{7}{2}{8}{2};\equal{7}{3}{8}{3};\equal{7}{5}{8}{5};\equal{7}{6}{8}{6};\equal{-6}{-7}{-6}{-8};\equal{-5}{-7}{-5}{-8};\equal{-3}{-7}{-3}{-8};\equal{-2}{-7}{-2}{-8};\equal{-1}{-7}{-1}{-8};\equal{1}{-7}{1}{-8};\equal{2}{-7}{2}{-8};\equal{3}{-7}{3}{-8};\equal{5}{-7}{5}{-8};\equal{6}{-7}{6}{-8};\equal{9}{-7}{9}{-8};\equal{10}{-7}{10}{-8};\equal{12}{-7}{12}{-8};\equal{13}{-7}{13}{-8};\equal{14}{-7}{14}{-8};\equal{-7}{-14}{-8}{-14};\equal{-7}{-13}{-8}{-13};\equal{-7}{-12}{-8}{-12};\equal{-7}{-10}{-8}{-10};\equal{-7}{-9}{-8}{-9};

}

\end{center}

\caption{Singular orbit of $\lambda_k + \rho$ for $n-1>k>1$}
\label{figure:singular_orbit_k}
\end{figure}
\begin{figure}
\begin{center}
\tikz[x=.4cm,y=.4cm]{ 

\def\n{8} 
\def\skipa{5} 
\def\skipb{11} 
\def\k{1} 

\pgfmathtruncatemacro{\kk}{\k + 1}
\pgfmathtruncatemacro{\kkk}{\kk + 1}

\def\margin{.35};
\draw[draw=black!10,fill=black!10] (\k-\margin,-\n-\margin) rectangle ++(1+2*\margin,\n+\k-2+2*\margin);
\draw[draw=black!10,fill=black!10] (\k-\margin,-\k-1-\margin) rectangle ++(\n+\k-2+2*\margin,1+2*\margin);
\draw[draw=black!10,fill=black!10] (\k+2-\margin,\k-\margin) rectangle ++(\n-\k-2+2*\margin,1+2*\margin);
\draw[draw=black!10,fill=black!10] (-\k-1-\margin,-\n-\margin) rectangle ++(1+2*\margin,\n-\k-2+2*\margin);
\node at (\n+.5,\k+.5) [right] {\scriptsize $(\mu_1,1)$};
\node[rotate=-90] at (-\k-.5,-\n-.5) [right] {\scriptsize $(-1,\mu_2)$};
\node at (\n+.5,-\k-.5) [right] {\scriptsize $(\mu_1,-1)$};
\node[rotate=-90] at (\k+.5,-\n-.5) [right] {\scriptsize $(1,\mu_2)$};

\draw[latex-latex, thin, draw=gray] (-\n+1,0)--(\n+1,0) node [right] {}; 
\draw[latex-latex, thin, draw=gray] (0,-\n-1)--(0,\n-1) node [above] {}; 

\newcommand{\ldominant}[2]{\draw[fill] (#1,#2) circle [radius=1pt];} 

\newcommand{\arrow}[4]{\draw [->] (.75*#1+.25*#3,.75*#2+.25*#4) -- (.25*#1+.75*#3,.25*#2+.75*#4);} 

\newcommand{\skipped}[4]{\draw [dotted,-] (.75*#1+.25*#3,.75*#2+.25*#4) -- (.25*#1+.75*#3,.25*#2+.75*#4);} 

\def\lengthcross{.1}
\newcommand{\trivial}[2]{\draw[black!50] (#1-\lengthcross,#2-\lengthcross)--(#1+\lengthcross,#2+\lengthcross); \draw[black!50] (#1-\lengthcross,#2+\lengthcross)--(#1+\lengthcross,#2-\lengthcross);} 

\def\equaldis{.07}
\newcommand{\equal}[4]{
	\draw (.75*#1+.25*#3+\equaldis*#2-\equaldis*#4,.75*#2+.25*#4+\equaldis*#3-\equaldis*#1) -- (.25*#1+.75*#3+\equaldis*#2-\equaldis*#4,.25*#2+.75*#4+\equaldis*#3-\equaldis*#1);
	\draw (.75*#1+.25*#3-\equaldis*#2+\equaldis*#4,.75*#2+.25*#4-\equaldis*#3+\equaldis*#1) -- (.25*#1+.75*#3-\equaldis*#2+\equaldis*#4,.25*#2+.75*#4-\equaldis*#3+\equaldis*#1);
} 

\foreach \x in {1,...,\n} \foreach \y in {1,...,\n} {
	\ifthenelse{\not \x=\skipa \and \not \x=\skipb \and \not \y=\skipa \and \not \y=\skipb \and \not \x=\k \and \not \x=\kk \and \not \y=\k \and \not \y=\kk}{
		\ifthenelse{\x>\y}{\trivial{\x}{\y};}{}; 
		\ifthenelse{\x=\y}{}{\trivial{\x}{-\y};}; 
		\ifthenelse{\x<\y}{\trivial{-\x}{-\y};}{}; 
		}{}
	}
\trivial{\kk}{\k}; \trivial{-\k}{-\kk};

\ifthenelse{\kk<\n}{\foreach \x in {\kkk,...,\n} {
	\ifthenelse{\not \x=\skipb \and \not \x=\skipa}{\ldominant{\x}{\k}; \ldominant{\x}{\kk};}{}
	}}{}
\foreach \x in {-\n,...,\k} {
	\ifthenelse{\not \x=\k \and \not \x=\kk \and \not \x=\skipa \and \not \x=0 \and \not \x=-\skipa \and \not \x=-\k \and \not \x=-\kk \and \not \x=-\skipb}{\ldominant{\k}{\x}; \ldominant{\kk}{\x};}{}
	}
\ifthenelse{\kk<\n}{\foreach \x in {-\n,...,-\kkk} {
	\ifthenelse{\not \x=-\skipb \and \not \x=-\skipa}{\ldominant{-\k}{\x}; \ldominant{-\kk}{\x};}{}
	}}{}
\foreach \x in {-\k,...,\n} {
	\ifthenelse{\not \x=\k \and \not \x=\kk \and \not \x=\skipa \and \not \x=\skipb \and \not \x=-\skipb \and \not \x=0 \and \not \x=-\skipa \and \not \x=-\k \and \not \x=-\kk}{\ldominant{\x}{-\k}; \ldominant{\x}{-\kk};}{}
	}
\ldominant{\k}{-\kk}; \ldominant{\kk}{-\k};

\arrow{4}{1}{3}{1};\arrow{7}{1}{6}{1};\arrow{8}{1}{7}{1};\arrow{4}{2}{3}{2};\arrow{7}{2}{6}{2};\arrow{8}{2}{7}{2};\arrow{1}{-7}{1}{-8};\arrow{1}{-6}{1}{-7};\arrow{1}{-3}{1}{-4};\arrow{1}{-2}{1}{-3};\arrow{2}{-7}{2}{-8};\arrow{2}{-6}{2}{-7};\arrow{2}{-3}{2}{-4};\arrow{3}{-1}{2}{-1};\arrow{4}{-1}{3}{-1};\arrow{7}{-1}{6}{-1};\arrow{8}{-1}{7}{-1};\arrow{4}{-2}{3}{-2};\arrow{7}{-2}{6}{-2};\arrow{8}{-2}{7}{-2};\arrow{3}{-2}{2}{-3};\arrow{2}{-1}{2}{-3};\arrow{3}{-2}{1}{-2};\arrow{-1}{-7}{-1}{-8};\arrow{-1}{-6}{-1}{-7};\arrow{-1}{-3}{-1}{-4};\arrow{-2}{-7}{-2}{-8};\arrow{-2}{-6}{-2}{-7};\arrow{-2}{-3}{-2}{-4};
\skipped{4}{-8}{6}{-8};\skipped{4}{-7}{6}{-7};\skipped{4}{-6}{6}{-6};\skipped{4}{-4}{6}{-4};\skipped{4}{-3}{6}{-3};\skipped{4}{-2}{6}{-2};\skipped{4}{-1}{6}{-1};\skipped{4}{1}{6}{1};\skipped{4}{2}{6}{2};\skipped{4}{3}{6}{3};\skipped{-6}{-8}{-4}{-8};\skipped{-6}{-7}{-4}{-7};\skipped{7}{4}{7}{6};\skipped{8}{4}{8}{6};\skipped{-3}{-6}{-3}{-4};\skipped{-2}{-6}{-2}{-4};\skipped{-1}{-6}{-1}{-4};\skipped{1}{-6}{1}{-4};\skipped{2}{-6}{2}{-4};\skipped{3}{-6}{3}{-4};\skipped{4}{-6}{4}{-4};\skipped{6}{-6}{6}{-4};\skipped{7}{-6}{7}{-4};\skipped{8}{-6}{8}{-4};
\equal{2}{-1}{1}{-2};\equal{3}{1}{3}{2};\equal{4}{1}{4}{2};\equal{6}{1}{6}{2};\equal{7}{1}{7}{2};\equal{8}{1}{8}{2};\equal{1}{-8}{2}{-8};\equal{1}{-7}{2}{-7};\equal{1}{-6}{2}{-6};\equal{1}{-4}{2}{-4};\equal{1}{-3}{2}{-3};\equal{3}{-1}{3}{-2};\equal{4}{-1}{4}{-2};\equal{6}{-1}{6}{-2};\equal{7}{-1}{7}{-2};\equal{8}{-1}{8}{-2};\equal{-1}{-8}{-2}{-8};\equal{-1}{-7}{-2}{-7};\equal{-1}{-6}{-2}{-6};\equal{-1}{-4}{-2}{-4};\equal{-1}{-3}{-2}{-3};

}

\end{center}

\caption{Singular orbit of $\lambda_{1} + \rho$}
\label{figure:singular_orbit_1}
\end{figure}
\begin{figure}
\begin{center}
\tikz[x=.4cm,y=.4cm]{ 

\def\n{8} 
\def\skipa{5} 
\def\skipb{11} 
\def\k{0} 

\pgfmathtruncatemacro{\kk}{\k + 1}
\pgfmathtruncatemacro{\kkk}{\kk + 1}

\def\margin{.35};
\draw[draw=black!10,fill=black!10] (\k-1-\margin,-\n-\margin) rectangle ++(2+2*\margin,\n+\k-2+2*\margin);

\draw[draw=black!10,fill=black!10] (\k+2-\margin,\k-1-\margin) rectangle ++(\n-\k-2+2*\margin,2+2*\margin);
\draw[draw=black!10,fill=black!10] (-\k-1-\margin,-\n-\margin) rectangle ++(1+2*\margin,\n-\k-2+2*\margin);

\node at (\n+1,0) [right] {\scriptsize $(\mu_1,0)$};
\node[rotate=-90] at (0,-\n-1) [right] {\scriptsize $(0,\mu_2)$};
\draw[latex-latex, thin, draw=gray] (-\n+1,0)--(\n+1,0) node [right] {}; 
\draw[latex-latex, thin, draw=gray] (0,-\n-1)--(0,\n-1) node [above] {}; 

\newcommand{\ldominant}[2]{\draw[fill] (#1,#2) circle [radius=1pt];} 

\newcommand{\arrow}[4]{\draw [->] (.75*#1+.25*#3,.75*#2+.25*#4) -- (.25*#1+.75*#3,.25*#2+.75*#4);} 

\newcommand{\skipped}[4]{\draw [dotted,-] (.75*#1+.25*#3,.75*#2+.25*#4) -- (.25*#1+.75*#3,.25*#2+.75*#4);} 

\def\lengthcross{.1}
\newcommand{\trivial}[2]{\draw[black!50] (#1-\lengthcross,#2-\lengthcross)--(#1+\lengthcross,#2+\lengthcross); \draw[black!50] (#1-\lengthcross,#2+\lengthcross)--(#1+\lengthcross,#2-\lengthcross);} 

\def\equaldis{.07}
\newcommand{\equal}[4]{
	\draw (.75*#1+.25*#3+\equaldis*#2-\equaldis*#4,.75*#2+.25*#4+\equaldis*#3-\equaldis*#1) -- (.25*#1+.75*#3+\equaldis*#2-\equaldis*#4,.25*#2+.75*#4+\equaldis*#3-\equaldis*#1);
	\draw (.75*#1+.25*#3-\equaldis*#2+\equaldis*#4,.75*#2+.25*#4-\equaldis*#3+\equaldis*#1) -- (.25*#1+.75*#3-\equaldis*#2+\equaldis*#4,.25*#2+.75*#4-\equaldis*#3+\equaldis*#1);
} 

\foreach \x in {1,...,\n} \foreach \y in {1,...,\n} {
	\ifthenelse{\not \x=\skipa \and \not \x=\skipb \and \not \y=\skipa \and \not \y=\skipb \and \not \x=\k \and \not \x=\kk \and \not \y=\k \and \not \y=\kk}{
		\ifthenelse{\x>\y}{\trivial{\x}{\y};}{}; 
		\ifthenelse{\x=\y}{}{\trivial{\x}{-\y};}; 
		\ifthenelse{\x<\y}{\trivial{-\x}{-\y};}{}; 
		}{}
	}

\ifthenelse{\kk<\n}{\foreach \x in {\kkk,...,\n} {
	\ifthenelse{\not \x=\skipb \and \not \x=\skipa}{\ldominant{\x}{\kk};}{}
	}}{}
\foreach \x in {-\n,...,\k} {
	\ifthenelse{\not \x=\k \and \not \x=\kk \and \not \x=\skipa \and \not \x=0 \and \not \x=-\skipa \and \not \x=-\k \and \not \x=-\kk \and \not \x=-\skipb}{\ldominant{\kk}{\x};}{}
	}
\ifthenelse{\kk<\n}{\foreach \x in {-\n,...,-\kkk} {
	\ifthenelse{\not \x=-\skipb \and \not \x=-\skipa}{\ldominant{-\kk}{\x};}{}
	}}{}
\foreach \x in {-\k,...,\n} {
	\ifthenelse{\not \x=\k \and \not \x=\kk \and \not \x=\skipa \and \not \x=\skipb \and \not \x=-\skipb \and \not \x=0 \and \not \x=-\skipa \and \not \x=-\k \and \not \x=-\kk}{\ldominant{\x}{-\kk};}{}
	}

\arrow{3}{1}{2}{1};\arrow{4}{1}{3}{1};\arrow{7}{1}{6}{1};\arrow{8}{1}{7}{1};\arrow{1}{-7}{1}{-8};\arrow{1}{-6}{1}{-7};\arrow{1}{-3}{1}{-4};\arrow{1}{-2}{1}{-3};\arrow{3}{-1}{2}{-1};\arrow{4}{-1}{3}{-1};\arrow{7}{-1}{6}{-1};\arrow{8}{-1}{7}{-1};\arrow{-1}{-7}{-1}{-8};\arrow{-1}{-6}{-1}{-7};\arrow{-1}{-3}{-1}{-4};\arrow{-1}{-2}{-1}{-3};
\skipped{4}{-8}{6}{-8};\skipped{4}{-7}{6}{-7};\skipped{4}{-6}{6}{-6};\skipped{4}{-4}{6}{-4};\skipped{4}{-3}{6}{-3};\skipped{4}{-2}{6}{-2};\skipped{4}{-1}{6}{-1};\skipped{4}{1}{6}{1};\skipped{4}{2}{6}{2};\skipped{4}{3}{6}{3};\skipped{-6}{-8}{-4}{-8};\skipped{-6}{-7}{-4}{-7};\skipped{7}{4}{7}{6};\skipped{8}{4}{8}{6};\skipped{-3}{-6}{-3}{-4};\skipped{-2}{-6}{-2}{-4};\skipped{-1}{-6}{-1}{-4};\skipped{1}{-6}{1}{-4};\skipped{2}{-6}{2}{-4};\skipped{3}{-6}{3}{-4};\skipped{4}{-6}{4}{-4};\skipped{6}{-6}{6}{-4};\skipped{7}{-6}{7}{-4};\skipped{8}{-6}{8}{-4};
\equal{2}{1}{2}{-1};\equal{3}{1}{3}{-1};\equal{4}{1}{4}{-1};\equal{6}{1}{6}{-1};\equal{7}{1}{7}{-1};\equal{8}{1}{8}{-1};\equal{1}{-8}{-1}{-8};\equal{1}{-7}{-1}{-7};\equal{1}{-6}{-1}{-6};\equal{1}{-4}{-1}{-4};\equal{1}{-3}{-1}{-3};\equal{1}{-2}{-1}{-2};

}
\end{center}

\caption{Singular orbit of $\lambda_{0} + \rho$}
\label{figure:singular_orbit_0}
\end{figure}

\begin{proof} The formula (\ref{equation:singular_orbit_coordinates}) is clear. A strictly $\mfrl$-dominant weight in the orbit cannot have both coordinates $k$ on the right-hand side of the bar. If $k=0$, then it cannot be on the right-hand side of the bar. The rest is easy checking of all the possibilities, and finding these possibilities in Figure \ref{figure:regular_orbit}.

In Figure \ref{figure:singular_orbit_1}, in principle there are the standard operators $(\mu_1,1) \to (\mu_1,-1)$ and $(1,\mu_2) \to (-1,\mu_2)$. Also, in Figure \ref{figure:singular_orbit_0} there is the standard operator $(1,0) \to (-1,0)$. But all of them are actually trivial, due to \cite{baston1985algebraic} (Theorem C in 3.5 and Theorem A in 3.6), so these arrows are omitted.
\end{proof}

Note that if we choose for each pair of equal points in the orbit the one that is upper or to the right, and consider the corresponding Hasse diagram element, we get the singular Hasse diagram $^S W^J$ in the sense of \cite{boe2005representation}, where $S=\Pi\setminus \{\alpha_2\}$ and $J=\{\alpha_k\}$.

\section{Non-standard operators and singular BGG complexes}

The main result of the paper is the following theorem:
\begin{theorem}
\label{theorem:main}
Fix $1<k<n-1$, and consider the singular orbit of $\lambda_k + \rho$. There exist invariant differential operators
\begin{gather}
\label{equation:non_std_k+} (k+1,k) \to (k,k-1), \\
\label{equation:non_std_k-} (-k+1,-k) \to (-k,-k-1),
\end{gather}
which are non-standard and of order two. Moreover, the following sequences of homogeneous sheaves and invariant differential operators are exact (in positive degrees) over the big affine cell in $\iGr(2,2n)$:
\begin{multline}
\label{equation:sing_BGG_k+}
(n-1,k) \to (n-2,k) \to \ldots \to (k+1,k) \to (k,k-1) \to \\ \to (k,k-2) \to \ldots \to (k,-n+1) \to 0,
\end{multline}
\begin{multline}
\label{equation:sing_BGG_k-}
(n-1,-k) \to (n-2,-k) \to \ldots \to (-k+1,-k) \to (-k,-k-1) \to \\ \to (-k,-k-2) \to \ldots \to (-k,-n+1) \to 0.
\end{multline}
All other arrows above represent the standard operators.

Similarly, for $k=1$, there exist invariant differential operators
\begin{gather}
\label{equation:non_std_1+} (2,1) \to (1,-1), \\
\label{equation:non_std_1-} (1,-1) \to (-1,-2),
\end{gather}
which are non-standard and of order at most three. Moreover, the following sequences are exact over the big affine cell in $\iGr(2,2n)$:
\begin{multline}
\label{equation:sing_BGG_1+}
(n-1,1) \to (n-2,1) \to \ldots \to (2,1) \to (1,-1) \to \\ \to (1,-2) \to \ldots \to (1,-n+1) \to 0,
\end{multline}
\begin{multline}
\label{equation:sing_BGG_1-}
(n-1,-1) \to (n-2,-1) \to \ldots \to (1,-1) \to (-1,-2) \to \\ \to (-1,-3) \to \ldots \to (-1,-n+1) \to 0.
\end{multline}

Finally, for $k=n-1$, the following sequences are exact over the big affine cell in $\iGr(2,2n)$ (no non-standard operators are needed here):
\begin{multline}
\label{equation:sing_BGG_n_1+}
(n-1,n-2) \to (n-1,n-3) \to \ldots \to (n-1,1) \to (n-1,-1) \to \\ \to (n-1,-2) \to \ldots \to (n-1,-n+1) \to 0,
\end{multline}
\begin{multline}
\label{equation:sing_BGG_n_1-}
(n-1,-n+1) \to (n-2,-n+1) \to \ldots \to (1,-n+1) \to (-1,-n+1) \to \\ \to (-2,-n+1) \to \ldots \to (-n+2,-n+1) \to 0.
\end{multline}
\end{theorem}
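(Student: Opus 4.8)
The plan is to realize both families of sequences as outputs of a Penrose transform along the double fibration
\[
\iGr(2,2n) \xleftarrow{\ \eta\ } G/\mfrr \xrightarrow{\ \tau\ } \iGr(1,2n),
\]
where $\mfrr$ is the standard parabolic obtained by crossing both the first and the second node, so that $G/\mfrr$ is the variety of isotropic flags $W_1 \subset W_2$ with $\dim W_1 = 1$ and $\dim W_2 = 2$. The map $\eta$ (remembering only $W_2$) has fibers $\mbbP(W_2) \cong \mbbP^1$, coming from the $\mfrgl(2,\mbbC)$-factor of the Levi of $\mfrp$; the map $\tau$ (remembering only $W_1$) has fibers $\mbbP(W_1^\perp / W_1) \cong \mbbP^{2n-3}$, on which the residual $\Sp(2n-2,\mbbC)$ acts. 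The reason for choosing $\iGr(1,2n)$ as the target is the Corollary above: in our singular infinitesimal character the generalized Verma modules over $\iGr(1,2n)$ are simple, so the cohomology of the relevant homogeneous sheaves there is concentrated and computable, which is exactly the situation where the Penrose transform produces a clean resolution upstairs. The two $\mfrl$-dominant conjugates $\tilde\lambda_k^{\pm} + \rho = (\pm k \pbar n-1,\ldots,1)$ of $\lambda_k + \rho$ serve as the two seeds, one producing the ``$+$'' family and the other the ``$-$'' family.

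Concretely, I would start from the homogeneous sheaf $\mcaO_{\mfrq}(\tilde\lambda_k^{\pm})$ on $\iGr(1,2n)$, where $\mfrq$ is the parabolic with the cross on the first node, and resolve $\tau^{-1}\mcaO_{\mfrq}(\tilde\lambda_k^{\pm})$ by the relative BGG complex along the $\mbbP^{2n-3}$-fibers of $\tau$. These fibers are themselves isotropic $1$-Grassmannians for $\Sp(2n-2,\mbbC)$, whose Hasse diagram and operators are exactly those in (\ref{equation:BGG_iGr_1}). Applying the direct image $R^\bullet\eta_\ast$ and computing it fiberwise by Bott--Borel--Weil on the $\mbbP^1$-fibers of $\eta$ then yields, via the resulting spectral sequence, a complex of homogeneous sheaves on $\iGr(2,2n)$ together with invariant differential operators. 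The bookkeeping is precisely the affine Weyl combinatorics already encoded in Figures \ref{figure:singular_orbit_n_1}--\ref{figure:singular_orbit_0}: each line bundle along a $\mbbP^1$-fiber is some $\mcaO(m)$, and whether $R^0\eta_\ast$ or $R^1\eta_\ast$ survives is governed by the sign of $m$, while the single gap value $m=-1$ (orthogonal to the $\mfrgl(2,\mbbC)$-root) is exactly where neither cohomology survives and the complex must ``turn the corner''. I would verify that the surviving weights are precisely the entries of the two L-shaped sequences (\ref{equation:sing_BGG_k+})--(\ref{equation:sing_BGG_k-}) and that all short edges are the first-order standard operators.

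The non-standard operators (\ref{equation:non_std_k+}) and (\ref{equation:non_std_k-}) are then exactly the higher differentials of this spectral sequence: they join the horizontal leg of one L to the vertical leg at the corner across the $m=-1$ gap, and are therefore invisible to the direct-image ($E_1$-level) operators. The order of such an operator equals the index $r$ of the differential $d_r$ that produces it; tracing the order-two operator already present in the fibre resolution (\ref{equation:BGG_iGr_1}) through the pushforward gives order two for $1<k<n-1$, whereas for $k=1$ the corner sits next to the long root $b_1$ and the relevant differential is pushed one step further, giving order at most three. In the extreme case $k=n-1$ the two legs degenerate so that the transition between them is already realized by a standard first-order operator, which is why no non-standard operator is needed; I would treat this as a direct, standard-only specialization of the same transform.

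Finally, exactness in positive degrees over the big affine cell follows from the concentration of cohomology on the twistor side. Restricting the fibration over the big Schubert cell $\mcaU \subseteq \iGr(2,2n)$, the pertinent sheaf cohomology on the corresponding open subset of $\iGr(1,2n)$ is concentrated in a single degree (Stein-ness of the cell together with the vanishing of the off-diagonal Bott--Borel--Weil contributions coming from the simplicity in the Corollary), so the Penrose spectral sequence forces the higher cohomology of the transformed complex to vanish, i.e.\ the complex is exact in positive degrees. I expect the genuine difficulty to concentrate at the shared object $(k,-k)$ where the $+$ and $-$ complexes overlap: there one must check that the differentials entering and leaving it assemble so that the spectral sequence degenerates with no spurious extension, and that the order-$r$ non-standard operator is genuinely non-zero rather than merely well-defined. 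Establishing non-vanishing and exactness precisely at this corner, rather than the routine weight combinatorics along the two legs, is the crux of the argument.
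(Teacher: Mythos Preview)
Your overall strategy---the Penrose transform along this flag double fibration, the relative BGG resolution on the $\mbbP^{2n-3}$-fibres, the pushforward via Bott--Borel--Weil on the $\mbbP^1$-fibres, and realising the non-standard operator as the higher differential of the hypercohomology spectral sequence---is exactly the paper's approach (you have merely swapped the names $\eta$ and $\tau$).

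The genuine gap is your exactness argument. The open set $Z\subset\iGr(1,2n)$ whose cohomology must be controlled is the image under your $\tau$ of the preimage of the big cell $\mcaU\subset\iGr(2,2n)$, and it is \emph{not} Stein: it strictly contains the big cell of $\iGr(1,2n)$, and in fact $H^1(Z,\mcaO_\mfrq(\tilde\lambda_k^\pm))$ is generically nonzero---it is precisely the kernel the complex resolves. So neither ``Stein-ness of the cell'' nor the Corollary on simplicity of Verma modules (which the paper does not invoke at all) yields the needed vanishing. What has to be shown is $H^i(Z,\mcaF)=0$ for $i\geq 2$ and every coherent $\mcaF$; the paper does this by computing $Z$ explicitly in coordinates and identifying it with the union of the two affine charts $\{\gamma_1\neq 0\}\cup\{\gamma_2\neq 0\}$ in $\mbbP^{2n-1}$, after which Cartan's theorem B plus Mayer--Vietoris gives the vanishing. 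This geometric lemma is the actual crux, not the behaviour at the shared object $(k,-k)$, which plays no special role in the spectral sequence.

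Two smaller points. First, the order of the non-standard operator is not ``the index $r$ of $d_r$'', nor is it inherited from the order-two operator in the fibre resolution (that operator sits at a different position in the relative BGG). The paper bounds the order by the difference of generalized conformal weights (values on the grading element), which is $2$ for $1<k<n-1$. Second, the non-standard operator is produced by an explicit zig-zag in the \v{C}ech bicomplex (one $d_h$, a $d_v$-lift through the column with trivial cohomology, then another $d_h$), which makes it clear that it is a differential operator and why it vanishes after composition with the adjacent standard ones; you should give this construction rather than just name the $d_2$.
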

The sequences in Theorem \ref{theorem:main} should be thought of as analogues of BGG resolutions in singular infinitesimal characters. They resolve the solution spaces of certain partial differential equations, the kernels of the first operator in each resolution. We will describe these operators later. These sequences will be referred to as the positive ((\ref{equation:sing_BGG_k+}), (\ref{equation:sing_BGG_1+}) and (\ref{equation:sing_BGG_n_1+})) and the negative((\ref{equation:sing_BGG_k-}), (\ref{equation:sing_BGG_1-}) and (\ref{equation:sing_BGG_n_1-})) singular BGG complexes. Note that the object $(k,-k)$ always appears in both the positive and negative BGG complex. The analogous results hold in non-minimal cases (see Remark \ref{remark:about_minimality}), but of course the orders of the differential operators are higher.

\begin{proof}
Consider the Penrose transform over the following double fibration:
\begin{equation}
\label{equation:double_fib_1_2}
\xymatrix@C=-4em@R=2em{ & G/Q = \begin{dynkin}
\dynkinline{1}{0}{2}{0}
\dynkinline{2}{0}{3}{0}
\dynkindots{3}{0}{4}{0}
\dynkindoubleline{4}{0}{5}{0}
\dynkincross{1}{0}
\dynkincross{2}{0}
\dynkindot{3}{0}
\dynkindot{4}{0}
\dynkindot{5}{0}
\end{dynkin} \ar[dl]_{\eta} \ar[dr]^{\tau}& \\
G/R = \begin{dynkin}
\dynkinline{1}{0}{2}{0}
\dynkinline{2}{0}{3}{0}
\dynkindots{3}{0}{4}{0}
\dynkindoubleline{4}{0}{5}{0}
\dynkincross{1}{0}
\dynkindot{2}{0}
\dynkindot{3}{0}
\dynkindot{4}{0}
\dynkindot{5}{0}
\end{dynkin} & & G/P = \begin{dynkin}
\dynkinline{1}{0}{2}{0}
\dynkinline{2}{0}{3}{0}
\dynkindots{3}{0}{4}{0}
\dynkindoubleline{4}{0}{5}{0}
\dynkindot{1}{0}
\dynkincross{2}{0}
\dynkindot{3}{0}
\dynkindot{4}{0}
\dynkindot{5}{0}
\end{dynkin} . }
\end{equation}
For details about the Penrose transform, the reader can consult \cite{baston2016penrose}. Choose an open subset $X \subseteq G/P$ (the big affine cell, or a ball or a polydisc inside it), and define $Y=\tau^{-1}(X)$ and $Z=\eta(Y)$, so that we have the restricted double fibration:
\begin{equation}
\label{equation:rest_double_fib_1_2}
\xymatrix@R=1em{ & Y \ar[dl]_{\eta} \ar[dr]^{\tau}& \\
Z & & X . }
\end{equation}
The left-hand side of each of the fibrations is usualy called the twistor space, and the upper one the correspondence space. Start with the strictly $\mfrr$-dominant weight
\begin{equation}
\tilde{\lambda} + \rho = (k \pbar n-1, n-2, \ldots, 1), \qquad 1 \leq k \leq n-1
\end{equation}
(already defined in (\ref{definition:lambda_k_conj}), but we ease notation here), and consider the homogeneous sheaf $\mcaO_\mfrr(\tilde{\lambda})$ on the twistor space. We use the same notation for sheaves on (\ref{equation:double_fib_1_2}) and their restrictions to (\ref{equation:rest_double_fib_1_2}).

The topological inverse image sheaf $\eta^{-1} \mcaO_\mfrr(\tilde{\lambda})$ is constant on the fibers of $\eta$, so it admits a locally exact $G$-invariant resolution by homogeneous sheaves on $G/Q$, the so called relative BGG resolution. This is just the regular BGG resolution on each fiber. Since $\eta^{-1}(eR) \cong \begin{dynkin}
\dynkinline{1}{0}{2}{0}
\dynkindots{2}{0}{3}{0}
\dynkindoubleline{3}{0}{4}{0}
\dynkincross{1}{0}
\dynkinlabel{1}{0}{below}{$\alpha_2$}
\dynkindot{2}{0}
\dynkindot{3}{0}
\dynkindot{4}{0}
\dynkinlabel{4}{0}{below}{$\alpha_n$}
\end{dynkin}$, our relative BGG resolution is just (\ref{equation:BGG_iGr_1}) in rank $n-1$, while the first coordinate is kept fixed. More precisely, the sequence with the following ($\rho$-shifted) parameters resolves $\eta^{-1} \mcaO_\mfrr(\tilde{\lambda})$:
\begin{multline}
\label{equation:rel_BGG}
(k \pbar n-1 \pbar n-2,\ldots,1) \rightarrow (k \pbar n-2 \pbar n-1,n-3,\ldots,1) \rightarrow \ldots \\
\ldots \rightarrow (k \pbar 1 \pbar n-1,\ldots,2) \rightarrow (k \pbar -1 \pbar n-1,\ldots,2) \rightarrow (k \pbar -2 \pbar n-1,\ldots,3,1) \rightarrow \ldots \\ \ldots \rightarrow (k \pbar -n+1 \pbar n-2,\ldots,1) \to 0.
\end{multline}
Let us denote the $p$-th term in the above sequence by $\Delta_\eta^p$, starting from $p=0$.

The next step in the Penrose transform is to calculate higher direct images via $\tau$, denoted by $\tau_\ast^i$, of the sequence (\ref{equation:rel_BGG}). This is done using the (relative version of the) Bott-Borel-Weil theorem, which in our situation reads (in the $\rho$-shifted coordinates):
\begin{equation}
\label{equation_BBW}
\tau_\ast^i \, (\mu_1 \pbar \mu_2 \pbar \ldots) = \begin{cases}
(\mu_1,\mu_2) & \text{if } \mu_1>\mu_2, \ i=0,  \\
(\mu_2,\mu_1) & \text{if } \mu_2>\mu_1, \ i=1,\\
0 & \text{otherwise}.
\end{cases}.
\end{equation}
Consider the generic case $1<k<n-1$. Applying (\ref{equation_BBW}) to (\ref{equation:rel_BGG}) yields the so called hypercohomology spectral sequence $E_r^{pq}$, with $E_1^{pq}=\Gamma(X,\tau_\ast^q \Delta_\eta^p)$:
\begin{equation}
\label{equation:spect_seq_E1}
\scalebox{.86}{\spectralsequence{(n-1,k) \ar[r]^{d_1} & (n-2,k) \ar[r]^-{d_2} & \ldots \ar[rr]^-{d_{n-k-2}} & & (k+1,k) & 0  & 0 & & \ldots & & 0 \\
0 & 0 & \ldots & & 0 & 0 & (k,k-1) \ar[rr]^-{d_{n-k+1}} & & \ldots \ar[rr]^-{d_{2n-2}} & & (k,-n+1).}}
\end{equation}
Here both indices $p$ and $q$ start from $0$, and all the arrows are standard operators. Note that this is almost (\ref{equation:sing_BGG_k+}). The construction of the missing non-standard operator (\ref{equation:non_std_k+}) is done similarly as in the proof of \cite[Theorem 17]{mrden2017singular}; we describe it here.

Consider the part of the Čech bi-complex that calculates the higher direct images of (\ref{equation:rel_BGG}):
\[ \xymatrix@=1em{ & \vdots & \vdots & \vdots & \\
\ldots \ar[r] & \mathbf{\check{C}^{1}_{k+1}} \ar[r] \ar[u] & \check{C}^{1}_{k} \ar[r] \ar[u] & \check{C}^{1}_{k-1} \ar[u] \ar[r] & \ldots \\
\ldots \ar[r] & \check{C}^{0}_{k+1} \ar[r] \ar[u] & \check{C}^{0}_{k}  \ar[r] \ar[u] & \mathbf{\check{C}^{0}_{k-1}} \ar[u] \ar[r] & \ldots \\
\ldots \ar[r] & (k \pbar k+1 \pbar \ldots) \ar[r] \ar[u] & (k \pbar k \pbar \ldots) \ar[r] \ar[u] & (k \pbar k-1 \pbar \ldots) \ar[u] \ar[r] & \ldots } \]
Here the horizontal morphisms $d_h$ are induced from the differentials of the relative BGG resolution. The vertical morphisms $d_v$ are the usual differentials in the Čech resolution. We have $d_v^2=0$, $d_h^2=0$, and for each square, $d_h d_v =-d_v d_h$. By definition, the higher direct images are equal (locally) to the vertical cohomologies, ignoring the bottom row. The cochain spaces with nontrivial vertical cohomology are denoted in the bold font. All other vertical cohomologies are trivial, including the complete middle column. 

We will define the operator (\ref{equation:non_std_k+}) on the representatives of the vertical cohomology classes. Take a cocycle $x \in \mathbf{\check{C}^{1}_{k+1}}$. From $d_v d_h(x) = - d_h d_v(x) =0$ we see that $d_h(x) \in \check{C}^{1}_{k}$ is a cocycle. Since the vertical cohomology is trivial, it follows that $d_h(x) \in \Im d_v$. So, there is $y \in \check{C}^{0}_{k}$ such that $d_v(y) = d_h(x)$. Then, $d_h(y) \in \mathbf{\check{C}^{0}_{k-1}}$ is a cocycle: $d_v d_h (y) = - d_h d_v (y ) = - d_h^2 (x) =0$.
\[ \xymatrix@R=1em{x \in \mathbf{\check{C}^{1}_{k+1}} \ar@{|->}[r] \ar@{|-->}[rrd] & d_h (x) \in \check{C}^{1}_{k} \\
& y \in \check{C}^{0}_{k} \ar@{|->}[u] \ar@{|->}[r] & d_h (y) \in \mathbf{\check{C}^{0}_{k-1}} } \]
It is easy to see that the map $[x] \mapsto [d_h(y)]$ is well defined on the vertical cohomology classes, local and $G$-invariant, hence defines an invariant differential operator (\ref{equation:non_std_k+}). This makes (\ref{equation:sing_BGG_k+}) into a cochain complex, since $d_h^2=0$.

Deriving (\ref{equation:spect_seq_E1}) (i.e., taking the horizontal cohomologies) yields $E_2^{pq}$:
\begin{equation}
\label{equation:spect_seq_E2}
\spectralsequence{\Ker d_1 & \bullet  & \ldots  & \bullet & \Coker d_{n-k-2} \ar[rrd] & 0 & 0 & 0 & \ldots & 0 \\
0 & 0 & \ldots & 0 & 0 & 0 & \Ker d_{n-k+1} & \bullet & \ldots & \bullet ,} 
\end{equation}
where the only non-trivial morphism is induced from the just defined operator (\ref{equation:non_std_k+}). Deriving once more yields $E_3^{pq} = E_\infty^{pq}$ (there are no more non-trivial morphisms). The main property of this spectral sequence is that it converges to the cohomology on the twistor space (see \cite[9.1]{baston2016penrose}):
\begin{equation}
E_r^{pq} \ \Longrightarrow \ H^{p+q}(Z,\mcaO_\mfrr(\tilde{\lambda})).
\end{equation}

From this, and Lemma \ref{lemma:vanishing} bellow, we see that if $X$ is the big cell, all the bullets in (\ref{equation:spect_seq_E2}) are $0$ and the arrow is an isomorphism. Hence the sequence (\ref{equation:sing_BGG_k+}) is exact in positive degrees, and resolves $\Ker d_1 \cong H^1(Z,\mcaO_\mfrr(\tilde{\lambda}))$.

The order of an invariant differential operator is bounded by the difference of the generalized conformal weights (that is, the defining weight applied to the grading element) in the domain and the codomain, which is in our case $2$. See \cite{franek2008generalized}.

Completely the same arguments, but starting from the weight
\[ \tilde{\lambda} + \rho = (-k \pbar n-1, n-2, \ldots, 1), \qquad 1 \leq k \leq n-1, \]
give (\ref{equation:non_std_k-}) and (\ref{equation:sing_BGG_k-}). The proof for the boundary cases $k=1$ and $k=n-1$ is similar.
\end{proof}
\begin{lemma}
\label{lemma:vanishing}
Let $X \subseteq \iGr(2,2n)$ be the big affine cell, and $Z$ the corresponding twistor space as in (\ref{equation:rest_double_fib_1_2}). For any coherent sheaf $\mcaF$ on $Z$ we have
\[ H^i(Z,\mcaF) = 0, \qquad i \geq 2. \]
\end{lemma}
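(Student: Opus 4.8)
The plan is to identify the twistor space $Z$ explicitly as an open subset of $G/R \cong \mbbP^{2n-1}$ and to exhibit on it the structure of a vector bundle over $\mbbP^1$; the vanishing then drops out from the cohomological dimension of the base. Recall that $G/R = \iGr(1,2n) \cong \mbbP^{2n-1}$ since the form is skew. Decompose $\mbbC^{2n} = U \oplus V_0 \oplus U^\ast$ according to the Levi factor $\mfrl = \mfrgl(2,\mbbC) \oplus \mfrsp(2n-4,\mbbC)$, where $U = \mbbC^2$, $V_0 = \mbbC^{2n-4}$, the form pairs $U$ with $U^\ast$ and restricts to the symplectic form on $V_0$. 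The base point of $G/P$ is the isotropic plane $U$, and the big cell $X$ consists of those isotropic $2$-planes $W_2$ that are graphs of linear maps $U \to V_0 \oplus U^\ast$. A short computation with the isotropy condition shows these graphs are parametrized by a pair $(A,S)$ with $A \in \Hom(U,V_0)$ arbitrary and $S \in \Hom(U,U^\ast)$ whose skew part is forced by $A$, so $X \cong \mbbC^{4n-5}$ is an affine space.

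First I would describe $Z = \eta(\tau^{-1}(X))$ as the set of lines $W_1 \subseteq \mbbC^{2n}$ contained in some $W_2 \in X$. Since each such $W_2$ projects isomorphically onto $U$, every line in $Z$ has nonzero $U$-component, so the linear projection $\mbbC^{2n} \to U$ induces a morphism $\pi \colon Z \to \mbbP(U) = \mbbP^1$. The key step is to show that the fiber of $\pi$ over a line $\ell = [u_0]$ is the full affine space $\{[u_0 + w] : w \in V_0 \oplus U^\ast\} \cong \mbbC^{2n-2}$: given any target $w$, one solves for a graph $(A,S) \in X$ with $(A,S)u_0 = w$, which is possible because $A u_0$ ranges over all of $V_0$ and the symmetric part of $S$ contributes all of $U^\ast$. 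Hence $Z$ is exactly the complement $\mbbP^{2n-1} \setminus \mbbP(V_0 \oplus U^\ast)$ of a linear subspace of codimension $2$, realized by $\pi$ (projection from that center) as a rank-$(2n-2)$ vector bundle over $\mbbP^1$.

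To conclude, I would cover $\mbbP^1$ by its two standard affine charts $U_0, U_1$; then $Z_j := \pi^{-1}(U_j) \cong \mbbC \times \mbbC^{2n-2}$ and $Z_0 \cap Z_1 \cong \mbbC^\ast \times \mbbC^{2n-2}$ are all Stein. By Cartan's Theorem B these opens are acyclic for any coherent $\mcaF$, so by Leray's theorem $H^i(Z,\mcaF)$ is computed by the \v{C}ech complex of the two-element cover $\{Z_0, Z_1\}$. That complex is concentrated in degrees $0$ and $1$, which forces $H^i(Z,\mcaF) = 0$ for all $i \geq 2$. Equivalently, $\pi$ is an affine (Stein) morphism, so $R^q\pi_\ast \mcaF = 0$ for $q>0$ and $H^i(Z,\mcaF) = H^i(\mbbP^1, \pi_\ast \mcaF) = 0$ for $i \geq 2$ because $\dim \mbbP^1 = 1$.

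The main obstacle is the geometric identification of $Z$, and specifically the verification—via the big-cell coordinates together with the isotropy constraint—that the fibers of $\pi$ are \emph{complete} affine spaces, so that $Z$ is the entire complement of the codimension-$2$ center and not merely a proper open subset of it. Once that is settled, the fibration over $\mbbP^1$ and hence the two-chart Stein cover are immediate, and the cohomological statement follows at once.
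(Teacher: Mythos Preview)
Your proof is correct and follows essentially the same strategy as the paper: both identify $Z$ as the set of lines in $\mbbP^{2n-1}$ with nonzero projection to $U$ (the first two coordinates) and cover it by the two affine charts $\{\gamma_1 \neq 0\}$ and $\{\gamma_2 \neq 0\}$, then conclude via Cartan's Theorem~B and a two-term \v{C}ech/Mayer--Vietoris argument. The paper checks the key surjectivity step by writing down the big-cell coordinates explicitly and solving $1\cdot C_1 + \gamma_2\cdot C_2 = \gamma$ for the parameters, rather than your more structural argument via $(A,S)\mapsto (Au_0,Su_0)$, and it does not name the vector-bundle structure over $\mbbP^1$; but the cover and the conclusion are identical.
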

\begin{proof}
It is enough to cover $Z$ with two affine open subsets; the claim will then follow from Cartan's theorem B and Mayer-Vietoris exact sequence.

The canonical coordinates on the big affine cell are given by the composition $\mfru^- \stackrel{\exp}{\longrightarrow} U^- \longrightarrow G/P \cong \iGr(2,2n)$. Recall (\ref{equation:Delta_u}), and note that the negative root vectors give a coordinate system on $\mfru^-$; we write them as the corresponding roots. Then the composition above gives the following coordinates on $X$:
\begin{equation}
\label{equation:coord_big_cell}
\begin{pmatrix}
1 & 0 \\
0 & 1 \\
a_{13} & a_{23} \\
\vdots & \vdots \\
a_{1n} & a_{2n} \\
b_1 & c_{12}- \frac{1}{2} \sum_{j=3}^n a_{1j}c_{2j} + \frac{1}{2} \sum_{j=3}^n a_{2j}c_{1j} \\
c_{12}+ \frac{1}{2} \sum_{j=3}^n a_{1j}c_{2j} - \frac{1}{2} \sum_{j=3}^n a_{2j}c_{1j} & b_2 \\
c_{13} & c_{23} \\
\vdots & \vdots \\
c_{1n} & c_{2n} \\
\end{pmatrix}.
\end{equation}
Denote the columns above by $C_1$ and $C_2$. The matrix (\ref{equation:coord_big_cell}) represents the subspace $\lspan_\mbbC\{C_1,C_2\} \in \iGr(2,2n)$ (in a fixed symplectic basis). The space $G/Q$ can be modeled as the space of isotropic flags of type $(1,2)$ (as in (\ref{equation:flags})), and then the maps $\eta$ and $\tau$ become projections to the first and the second component respectively. So the twistor space $Z=\eta(\tau^{-1}(X)) \subseteq \iGr(1,2n)$ consists of all the lines passing through at least one plane of type (\ref{equation:coord_big_cell}):
\begin{equation}
\label{equation:twistor_space}
Z= \big\{ \lspan_\mbbC \{ \alpha \, C_1 + \beta \, C_2 \}  \ \colon \ \alpha,\beta \in \mbbC \text{ not both } 0 \big\}.
\end{equation}

We claim that this is equal to
\begin{equation}
Z' := \big\{ \lspan_\mbbC \{ \gamma \} \ \colon \ \gamma=\begin{pmatrix} \gamma_1 \\ \vdots \\ \gamma_{2n} \end{pmatrix}, \ \gamma_1=1 \big\} \cup \big\{ \lspan_\mbbC \{ \delta \} \ \colon \ \delta=\begin{pmatrix} \delta_1 \\ \vdots \\ \delta_{2n} \end{pmatrix}, \ \delta_2=1 \big\}
\end{equation}
Both of these subsets of $Z'$ are open, and affine (isomorphic to $\mbbC^{2n-1}$). Because in (\ref{equation:twistor_space}) we can take $\alpha=1$ or $\beta=1$, the inclusion $Z \subseteq Z'$ is clear. To see the converse, take $\gamma$ with $\gamma_1=1$ (it is similar for $\delta$). Then $1 \, C_1 + \gamma_2 \, C_2 = \gamma$ for the following values of the coordinates:
\begin{align*}
& a_{1j} = \gamma_j, \ a_{2j} = 0, c_{1j} = \gamma_{n+j}, \ c_{2j} = 0, \quad j=3,\ldots,n, \\
& b_1 = \gamma_{n+1} - \gamma_2 \cdot \gamma_{n+2}, \ c_{12}=\gamma_{n+2}, \ b_2=0.
\end{align*}
\end{proof}

\subsection{Description of $d_1$}

Each of the singular BGG complexes resolves the kernel of a certain invariant differential operator (the first operator in  the complex) that we called $d_1$. This kernel turned out to be isomorphic to certain cohomology on the twistor space. Here we will describe these operators for the minimal cases, in terms of maximal vectors of the corresponding generalized Verma modules.

Recall that an invariant differential operator $d \colon \mcaO_\mfrp(\lambda) \to \mcaO_\mfrp(\mu)$ corresponds to a homomorphism $f \colon M_\mfrp(\mu) \to M_\mfrp(\lambda)$, which in turn, is completely determined by the image of the highest weight vector $1 \otimes v_\mu$. This image must be a maximal vector $v_{\text{max}} \in M_\mfrp(\lambda)$ of weight $\mu$, that is, annihilated by all the positive root vectors. This is computable in particular cases, but not in general. To go back from $f$ to $d$, take a section of $\mcaO_\mfrp(\lambda)$. Such a section corresponds to a $P$-equivariant  $E_\mfrp(\lambda)$-valued function $s$ defined locally on $G$ (suppose around $e$, without loss of generality). Take $v \in F_\mfrp(\mu)$, and write $f(1\otimes v)$ as a finite sum of elements $u_1 \ldots u_k  \otimes w$, where $u_i \in \mfru^-$ and $w \in F_\mfrp(\lambda)$. Then $(ds)(e) \in E_\mfrp(\mu) = F_\mfrp(\mu)^\ast$ evaluated at $v$ is the finite sum of terms $((L_{u_1} \ldots L_{u_k}  s)(e))(w)$, where $L_{u_i}$ are the left-invariant vector fields. For details, see \cite[A]{cap2001bernstein}.

For each $d_1$ we will write down the corresponding maximal vector, which determines $d_1$ as described above. Let us fix some notation:
\begin{itemize}
\item For $\gamma \in \Delta^+(\mfrg,\mfrh)$ denote by $Y_\gamma$ the standard root vector for $-\gamma$.

\item Denote by $W_{(\lambda_1,\lambda_2)}$ the representation of $\mfrgl(2,\mbbC) \subseteq \mfrl$ (the first node in the Dynkin diagram) with highest weight $(\lambda_1,\lambda_2)$, by $w_{\lambda_1-\lambda_2}$ its highest weight vector, and define $w_{i-2} := 2/(i+\lambda_1-\lambda_2) \cdot Y_{a_{12}} \, w_{i}$ inductively.

\item Denote by $V$ the standard representation of $\mfrsp(2n-4,\mbbC) \subseteq \mfrl$ (the last $n-2$ nodes) with a symplectic basis $\{e_3,\ldots,e_n,f_3,\ldots,f_n\}$.

\item Declare that $\mfrgl(2,\mbbC)$ acts trivially on $V$, and also $\mfrsp(2n-4,\mbbC)$ on $W_{(\lambda_1,\lambda_2)}$.
\end{itemize} 

For $0<k<n-2$ in the positive BGG complex, we have $(n-1,k) \stackrel{d_1}{\longrightarrow} (n-2,k)$. By subtracting $\rho$ and switching to generalized Verma modules, $d_1$ corresponds to
\[ M_\mfrp(-2,k-n+1 \pbar 1,0, \ldots, 0) \rightarrow  M_\mfrp(-1,k-n+1 \pbar 0,0, \ldots, 0) = U(\mfru^-) \otimes W_{(-1,k-n+1)}. \]
The vectors of the correct weight in the codomain are spanned by $Y_{a_{13}} \otimes w_{n-k-2}$ and $Y_{a_{23}} \otimes w_{n-k-4}$. It is not hard to find that $Y_{a_{13}} \otimes w_{n-k-2} - Y_{a_{23}} \otimes w_{n-k-4}$ is the linear combination that is annihilated by the positive root vectors (it is enough to consider only the simple roots). So, this is the maximal vector that determines $d_1$ in this case. We list all the cases in Table \ref{table:max_vect}.

\begin{table}[ht]
\begin{tabular}{|l||l|l|}
\hline 
 & $F_\mfrp(\lambda)$ & $v_\text{max}$ for $d_1$ in the positive BGG complexes \\ 
\hline\hline 
$0<k<n-2$ & $W_{(-1,k-n+1)}$ & $Y_{a_{13}} \otimes w_{n-k-2} - Y_{a_{23}} \otimes w_{n-k-4}$ \\ 
\hline 
$n>3$, $k=n-1$ &  $W_{(-1,-1)} \otimes V$ & $ Y_{a_{24}} \otimes 1 \otimes e_3 - Y_{a_{23}} \otimes 1 \otimes e_4 $ \\ 
\hline 
$n>3$, $k=n-2$ &  $W_{(-1,-1)}$ & $(Y_{a_{13}}Y_{a_{24}} - Y_{a_{14}}Y_{a_{23}})\otimes 1$ \\ 
\hline 
$n=3$, $k=2$ &  $W_{(-1,-1)} \otimes V$ & $(Y_{c_{23}}Y_{a_{23}} - 4Y_{b_2}) \otimes 1 \otimes e_3 - Y_{a_{23}}^2 \otimes 1 \otimes e_4 $ \\ 
\hline
$n=3$, $k=1$ &  $W_{(-1,-1)} $ & $(Y_{a_{23}}^2 Y_{c_{13}} - Y_{c_{23}} Y_{a_{23}} Y_{a_{13}}  - 4 Y_{a_{13}}  Y_{b_{2}} ) \otimes 1 $ \\ 
\hline\hline 
 & $F_\mfrp(\lambda)$ & $v_\text{max}$ for $d_1$ in the negative BGG complexes\\ 
\hline\hline
$0<k<n$ & $W_{(-1,-n-k-1)}$ & $Y_{a_{13}}\otimes w_{n+k-2} + Y_{a_{23}}\otimes w_{n+k-4}$ \\
\hline
\end{tabular}
\medskip
\caption{The first operators in the BGG complexes}
\label{table:max_vect}
\end{table}

\subsection{Comments on the remaining cases}

We have obtained only partial results in the ``long root singular'' ($k=0$) case. There seem to be two non-standard operators, which connect the connected components in Figure \ref{figure:singular_orbit_0}. Conjecturally, the singular BGG complex in this case should have the following shape (similar to the even-orthogonal case, see \cite{krump2006singular}):
\[ \xymatrix@C=1em@R=.5em{ (n-1,0) \ar[r] & \ldots \ar[r] & (2,0)\ar[rd] \ar[r] & (1,0)\ar@{}[d]|{\oplus} \ar[rd] \\
& & & (0,-1) \ar[r] & (0,-2) \ar[r] & \ldots \ar[r] & (0,-n+1)}. \]
New methods are needed to obtain these BGG complexes, which authors are currently working on. Also, weights with multiple singularities (for example $(2211)$ and $(2210)$) have much smaller singular orbits, but somehow evade this kind of Penrose transform. All such (minimal integral) weights define scalar-generalized Verma modules, so homomorphisms between them are classified in \cite{matumoto2006homomorphisms}.

Finally, we note that the methods presented here are likely to be applicable to the isotropic Grassmannians $\iGr(s,2n)$ for $s>2$, at least in highly-singular cases.

\bibliographystyle{alpha}
\bibliography{My_BibTex_Library}

\end{document}